\newcommand{\ba}{\begin{array} }
\newcommand{\ea}{\end{array} }
\newcommand{\bae}{\begin{eqnarray}}
\newcommand{\eae}{\end{eqnarray}}
\newcommand{\bea}{\begin{eqnarray*}}
\newcommand{\eea}{\end{eqnarray*}}
\newcommand{\be}{\begin{equation}}
\newcommand{\ee}{\end{equation}}
\newcommand{\modifyb}[1]{\textcolor{black}{#1}}
\newcommand{\modifyr}[1]{\textcolor{black}{#1}}
\begin{document}
\doi{10.1080/1023619YYxxxxxxxx}
 \issn{1563-5120}
\issnp{1023-6198} \jvol{00} \jnum{00} \jyear{2011} \jmonth{October}

\markboth{Taylor \& Francis and I.T. Consultant}{Journal of Biological Dynamics}

 \title{\modifyr{Global Dynamics of a Discrete Two-species Lottery-Ricker Competition Model}}
\author{Yun Kang\thanks{$^\ast$Corresponding author. Email: Yun.Kang@asu.edu}\\\vspace{3pt}  {\em{Applied Sciences and Mathematics, Arizona State University, \\Mesa, AZ 85212, USA. E-mail: yun.kang@asu.edu}}\\\vspace{6pt} Hal Smith\\\vspace{3pt}{\em{School of Mathematical and Statistical Sciences, Arizona State University,\\ P.O. Box 871804, Tempe, AZ 85287, USA.  E-mail: halsmith@asu.edu}}}
 \maketitle
 \begin{abstract}
In this article, we study the global dynamics of a discrete two dimensional competition model. We give sufficient conditions on the persistence of one species and the existence of local asymptotically stable interior period-2 orbit for this system. Moreover, we show that {for a} certain parameter range, there exists a compact interior attractor that attracts all interior points except a Lebesgue measure zero set. This result gives a weaker form of coexistence which is referred to as \emph{relative permanence}. This new concept of coexistence combined with numerical simulations strongly suggests that the {basin} of attraction of the locally asymptotically stable interior period-2 orbit is {an} infinite union of connected components. This idea may apply to many other ecological models. Finally, we discuss the generic dynamical structure that gives \emph{relative permanence}. \bigskip
\begin{keywords}
{Basin} of Attraction, Period-2 Orbit, Uniformly Persistent, Permanence, Relative Permanence\end{keywords}
\begin{classcode}Primary 37B25; 39A11; 54H20; Secondary 92D25\end{classcode}\bigskip
\end{abstract}
\section{A discrete two species competition model}\label{sec:model}
Mathematical models can provide important {insight} into the general conditions that permit the coexistence of competing species and the situations that lead to competitive exclusion (Elaydi and Yakubu 2002). A  model of resource-mediated competition between two competing species can be described as follows (Adler 1990; Franke and Yakubu 1991a, 1991b)
\bae\label{gx}
x_{n+1}&=&\frac{r_1 x_n}{a+x_n+y_n}\\
\label{gy}
y_{n+1}&=&y_n e^{r_2-(x_n+y_n)}
\eae where $x_n$ and $y_n$ denote the population sizes of two competing species $x$ and $y$ at generation $n$ respectively; all parameters $r_1, r_2, a$ are strictly positive. Franke and Yakubu (1991a) established the ecological principle of mutual exclusion as a mathematical theorem in a general discrete two-species competition system including \eqref{gx}-\eqref{gy}. They (1991b) also gave an example that such exclusion principle fails where two species can coexist through a locally stable period-2 orbit. This phenomenon of coexistence has been observed in many other competition models (e.g., Yakubu 1995, 1998; Elaydi and Yakubu 2002a, 2002b) including system \eqref{gx}-\eqref{gy} with $a=0$:
\bae\label{gx0}
x_{n+1}&=&\frac{r_1 x_n}{x_n+y_n}\\
\label{gy0}
y_{n+1}&=&y_n e^{r_2-(x_n+y_n)}
\eae
Notice that the equation \eqref{gx0} is the non-overlapping \modifyb{lottery model (Chesson 1981)} with singularity at the origin. Every initial condition $(x_0,0)$ with $x_0>0$  maps to $(r_1,0)$. \modifyb{The lottery model emphasizes the role of chance. It assumes that resources are captured at random by recruits from a larger pool of potential colonists (Sale 1978; chapter 18, Chain \emph{et al} 2011).} When $y_n=0$, \eqref{gx0} can be a \modifyb{reasonably} good approximation for plant species where a single individual can sometimes grow very big in the absence of competition from others or \modifyb{ for a territorial marine species, such as coral reef fish, where a single individual puts out huge number of larvae} (communications with P. Chesson; also see Sale 1978). Chesson and Warner (1981) used such non-overlapping lottery models to study competition of species in a temporally varying environment. In this paper, we focus on the dynamics of \eqref{gx0}-\eqref{gy0}. \modifyr{The system \eqref{gx0}-\eqref{gy0} may be an appropriate model for resource competition between a territorial species $x$ and a non-territorial species $y$}.

A recent study by Kang (submitted to JDEA) shows that \eqref{gx0}-\eqref{gy0} is persistent with respect to the total population of two species, i.e., all initial conditions in $\mathbb R^2_+\setminus\{(0,0)\}$ are attracted to a compact set which is bounded away from the origin.  The results obtained in Kang (preprint 2010) allow us to explore the structure of the {basin} of attraction of the asymptotically stable period-2 orbit of the system \eqref{gx0}-\eqref{gy0} lying in the interior of the quadrant. In this article,  we study the global dynamics of \eqref{gx0}-\eqref{gy0}. The objectives of our study are two-fold:
\begin{enumerate}
\item Mathematically, it is interesting to study the global dynamics of \eqref{gx0}-\eqref{gy0} since it has singularity at the origin. Thus, the first objective of our study is to give sufficient conditions for competitive exclusion and coexistence of  \eqref{gx0}-\eqref{gy0}.
\item Biologically, it is very important to classify and give sufficient \modifyb{conditions for} the coexistence of species in ecological models. Among many forms of coexistence, permanence is the strongest concept since it requires all strictly positive initial conditions converge to the bounded interior attractor. Although permanence fails for \eqref{gx0}-\eqref{gy0}, we establish the weaker notion \emph{relative permanence}: almost all (relative to Lebesgue measure) strictly positive initial conditions converge to the bounded interior attractor. Numerical simulations of other ecological models (e.g., Franke and Yakubu 1991; Kon 2006;  Cushing, Henson and Blackburn 2007; Kuang and Chesson 2008) suggest the possibility that relative permanence may apply where permanence fails. Our second objective of this article is to draw attentions on the concept of \emph{relative permanence}. Our study could potentially provide insight on weaker forms of coexistence for general ecological models and open problems on the basins of attractions of stable cycles for a discrete competition model studied by Elaydi and Yakubu (2002a).
 \end{enumerate}

Simple analysis combined with numerical simulations suggest the following interesting dynamics of the system \eqref{gx0}-\eqref{gy0}
\begin{enumerate}
\item There is no interior fixed point. \modifyb{The eigenvalue governing the local transverse stability of the boundary equilibrium on the $x$-axes (i.e., $y=0$) is given by $e^{r_2-r_1}$ (or $\frac{r_1}{r_2}$ on the $y$-axes). If this eigenvalue is less than 1, then we say that the equilibrium point on the $x$-axes (or $y$-axes) is transversally stable, otherwise, it is transversally unstable.} Thus, if $r_1> r_2$, then the boundary equilibrium $\xi^*=(r_1,0)$ is transversally stable and $\eta^*=(0,r_2)$ is transversally unstable; while $r_1<r_2$, $\xi^*=(r_1,0)$ is transversally unstable and $\eta^*=(0,r_2)$ is transversally stable.
\item For a certain range of $r_1$ and $r_2$ values, there exists an asymptotically stable periodic-2 orbit in the interior of the quadrant which attracts almost every interior point in $\mathbb R^2_+$. For example, when $r_1=2,r_2=2.2$, the periodic-2 orbit is given by
$$(x_1^i,y_1^i)=(0.1536, 2.9629) \mbox{ and } (x_2^i, y_2^i)=(0.0986, 1.1849)$$ and \modifyb{the eigenvalues of the product of the Jacobian matrices along the orbit} are $0.91$ and  $0.26$.
\item There {exists} a heteroclinic orbit connecting $\xi^*$ to $\eta^*$ (see Figure \ref{hc_orbitd});
\begin{figure}[ht]
\centering
\includegraphics[width=100mm]{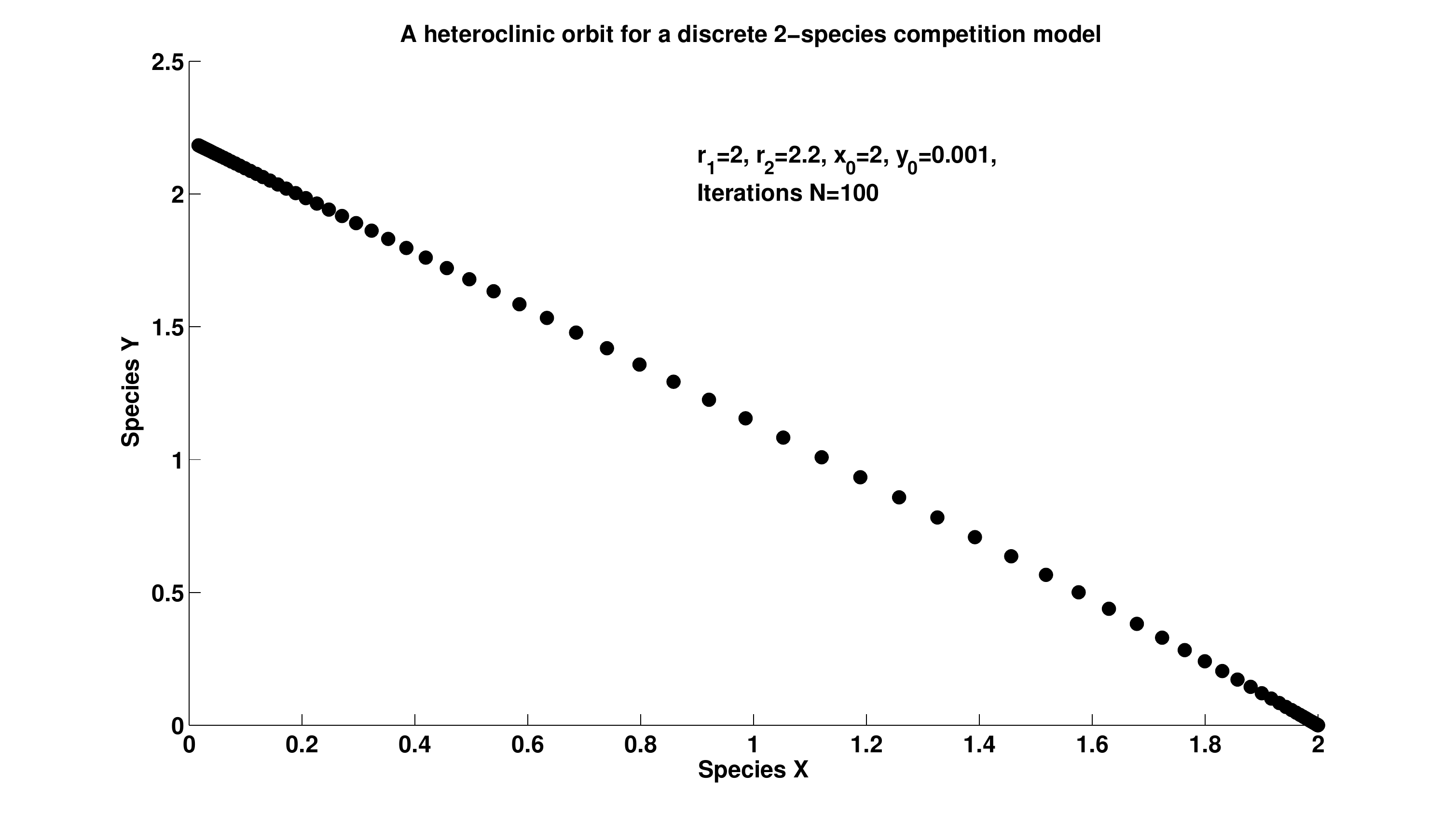}
 \caption{A heteroclinic orbit of the system \eqref{gx0}-\eqref{gy0} when $r_1=2, r_2=2.2, x_0=2, y_0=0.001$.}
 \label{hc_orbitd}
\end{figure}

\item The {basin} of attraction of the interior periodic-2 orbit $P_2^i$ consists of all interior points of $\mathbb R^2_+$ except all the pre-images of the heteroclinic curve $C$ (where $C$ is the closure of the union of all heteroclinic orbits, see Figure \ref{basind}).
\begin{figure}[ht]
\centering
\includegraphics[width=100mm]{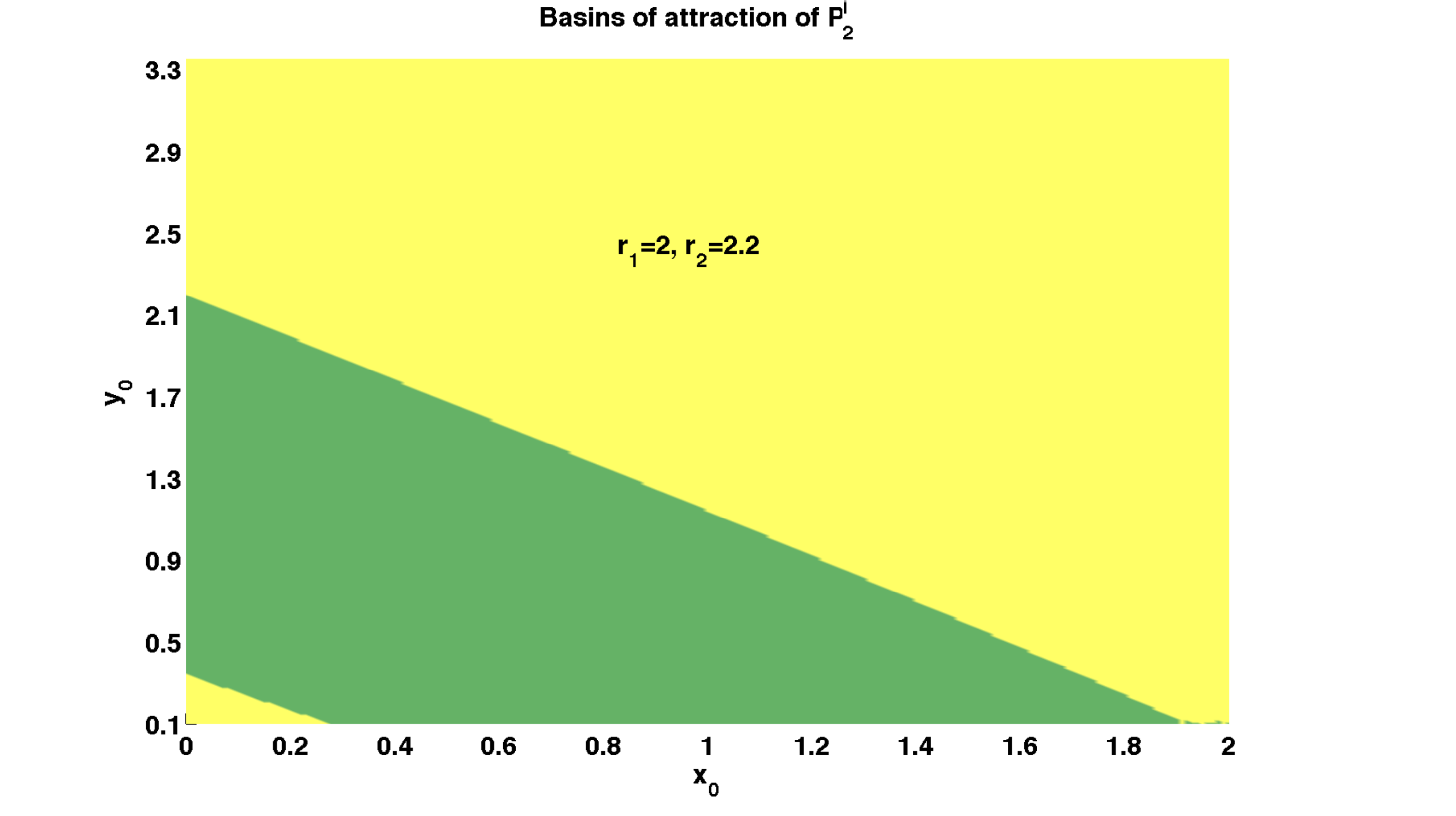}
 \caption{The basin of attraction of the interior period-2 orbit is the open quadrant minus the pre-images of the heteroclinic curve $C$. The latter partition the quadrant into components which are colored according to which of the two periodic points attract points in the component under the second iterate of the map. Given a point in one of the regions, there is a large number $N$, such that the point  will be very close to $(x_1^i,y_1^i)$ at the iteration $t$ and will be very close to $(x_2^i,y_2^i)$ at the iteration $t+1$ for all $t>N$.}
 \label{basind}
\end{figure}
\end{enumerate}
Moreover,  further analysis and numerical simulations suggest that if the system \eqref{gx0}-\eqref{gy0} satisfies the following conditions \textbf{C1-C3}, then it has the same global dynamics as the case $r_1=2, r_2=2.2$:
\begin{itemize}
\item\textbf{C1:} The values of $r_1, r_2$ satisfy $$2<r_2<2.52,\, r_2>r_1>1, \mbox{ and }\,e^{2r_2-1-e^{r_2-1}}>1.$$
\item\textbf{C2:} There is a boundary period-2 orbit $M_y=\{\eta_1,\eta_2\}=\{(0,y_1),(0,y_2)\}$ where $\frac{r_1^2}{y_1y_2}>1$.
\item\textbf{C3:} There is a heteroclinic orbit connecting $\xi^*$ to $\eta^*$ (see Figure \ref{hc_orbitd}).  
\end{itemize}
\noindent Condition \textbf{C1} implies that the equilibria $\xi^*$ and $\eta^*$ of the system \eqref{gx0}-\eqref{gy0} are saddle \modifyb{nodes}, where $\xi^*$ is transversally unstable and $\eta^*$ is transversally stable. Moreover, species $y$ can invade species $x$. Condition $2<r_2<2.52$ combined with Condition \textbf{C2} indicates that species $x$ can invade species $y$ on its periodic-2 orbit $\{(0,y_1),(0,y_2)\}$. Figure \ref{rp2} describes the schematic scheme of the global dynamics of the system \eqref{gx0}-\eqref{gy0} when it satisfies Condition \textbf{C1}-\textbf{C3}.
\begin{figure}[ht]
\begin{center}
   \includegraphics[width=100mm]{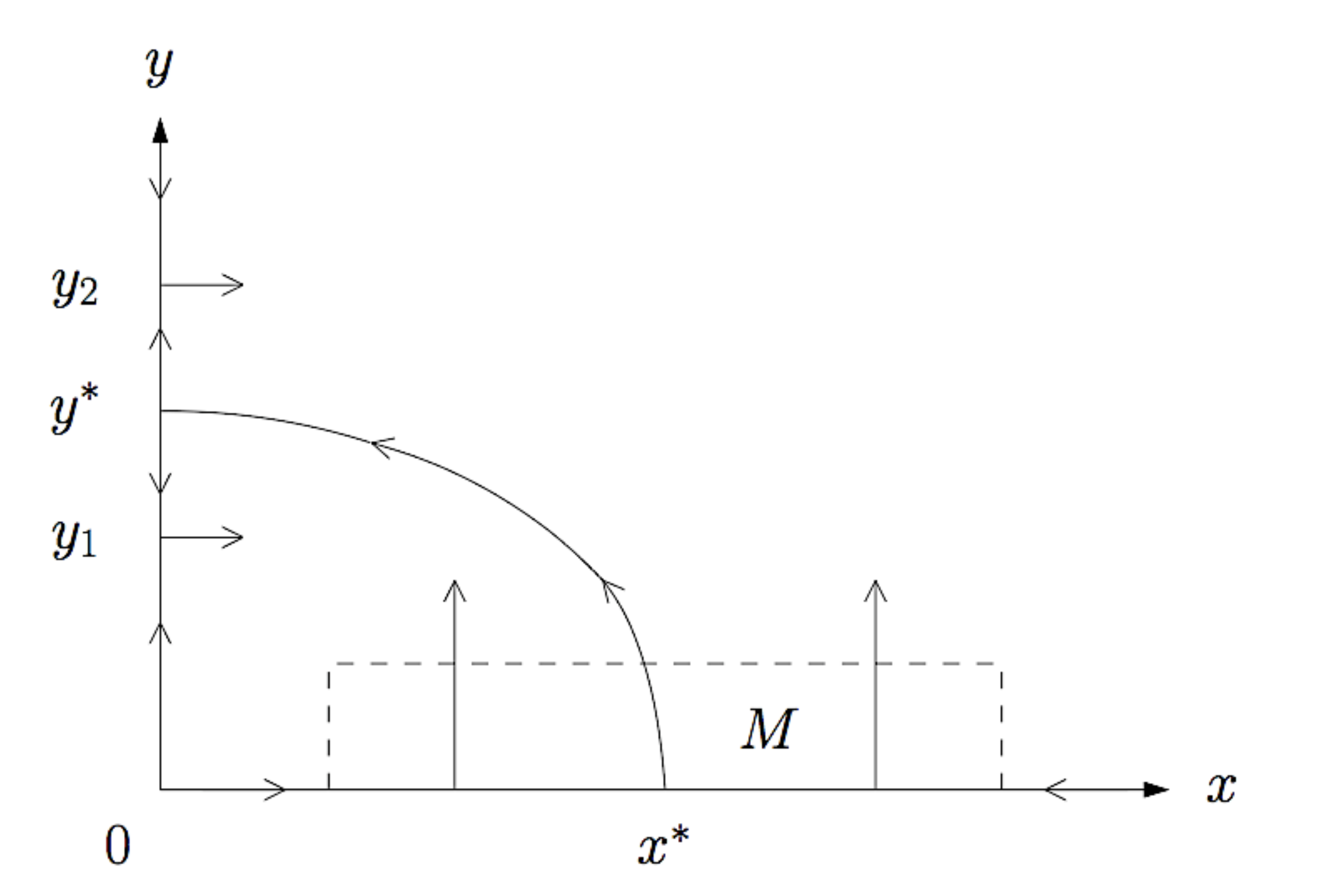}
\caption{Schematic features of the system \eqref{gx0}-\eqref{gy0} when $r_1=2.,r_2=2.2$.}
 \label{rp2}
 \end{center}
   \end{figure}

\indent The structure of the rest of the article is as follows: In Section \ref{sec:ps}, 
we give the basic notations and preliminary results that will be used in proving our main {theorems. In} 
Section \ref{sec:ce}, we obtain sufficient conditions on the persistence of one species and the extinction of the other species 
by using Lyapunov functions (Theorem \ref{th1:ce}){. In} Section \ref{sec:rp} we first give a sufficient condition on the existence 
of locally asymptotically stable interior period-2 orbit for the system \eqref{gx0}-\eqref{gy0} (Theorem \ref{th2:bifurcation}); 
then we show that {for a} certain parameter range, the system \eqref{gx0}-\eqref{gy0} is \emph{relative permanent}, i.e., 
it has a compact interior attractor that attracts almost points in $\mathbb R^2_+$ (Theorem \ref{th3:rp}) by applying theorems 
from persistent theory. In the last section \ref{sec:con} we discuss {the fact that the} global dynamics of the system \eqref{gx0}-\eqref{gy0} 
are generic rather than rare.  {Similar dynamic behaviors of \eqref{gx0}-\eqref{gy0} have been observed in many biological models}. 
Studying sufficient conditions for the relative permanence of the generalization of such biological models can be our future direction.

\section{Notations and preliminarily results}\label{sec:ps}
Notice that the system \eqref{gx0}-\eqref{gy0} has singularity at the origin $(0,0)$, thus its state space is defined as $X=\{(x,y)\in \mathbb R^2_+:\,0<x+y<\infty\}$. {Let $H$ denote the map defined by \eqref{gx0}-\eqref{gy0}.} Then $H:\,X\rightarrow X$ is a discrete semi-dynamical system where $H^0(\xi_0)=\xi_0=(x_0,y_0)$ and $H^n(\xi_0)=\xi_n=(x_n,y_n), n\in \mathbb Z_+$. Here, we give some definitions that {will be used} in the rest of the article.
\begin{definition}\label{de:pre-pt}[\emph{Pre-images of a Point}]
For a given point $\xi_0\in X$, we say $\xi\in X$ is a rank-$k$ pre-image of $\xi_0$ if $H^k(\xi)=\xi_0$. The collection of rank-$k$ ($k\geq 1$) pre-images of $\xi_0$ is defined as $$H^{-k}(\xi_0)=\{\xi\in X: H^k(\xi)=\xi_0\}$$
and the collection of all pre-images of $\xi_0$ (including $k=0$) is defined as $$EF_{\xi_0}=\left(\bigcup_{k\geq 1} H^{-k}(\xi_0)\right)\bigcup \{\xi_0\}.$$
\end{definition}
{\begin{definition}\label{de:invariant-set}[\emph{Invariant Set}] We say $M\subset X$ is an invariant set of $H$ if $H(M)=M$.
\end{definition}}
\begin{definition}\label{de:pre-set}[\emph{Pre-images of an Invariant Set}]
Let $M$ be an invariant set for the system \eqref{gx0}-\eqref{gy0}, then $H^{0}(M)=H(M)=M$. The collection of rank-$k$ pre-images of $M$ ($k\geq 1$) is defined as
$${H^{-k}(M)=\bigcup_{\xi_0\in M}\{\xi\in X\setminus M: H^k(\xi)=\xi_0\}};$$ and the collection of all pre-images of $M$ (including $k=0$) is defined as

$${EF_M=\left(\cup_{k\geq 1}H^{-k}(M)\right) \bigcup M=\left[\bigcup_{k\geq 1}\left(\bigcup_{\xi_0\in M}\{\xi\in X\setminus M: H^k(\xi)=\xi_0\}\right)\right]\bigcup M.}$$

\end{definition}
\noindent \textbf{Note:} If $M$ is an invariant set of $H$, then $H^{-k}(M)$ should not contain points in $M$ for all $k\geq 1$.

\begin{definition}\label{de:uniform-repeller}[\emph{Uniform Weak Repeller}] Let $\tilde{X}$ be a positively invariant subset of $X$. We call the compact invariant set $C$ is a \emph{uniformly weak repeller} with respect to $\tilde{X}$ if  there exists some $\epsilon>0$ such that
$$\limsup_{n\rightarrow\infty} d\left( H^n(\xi), C\right)>\epsilon \mbox{ for any } \xi\in \modifyb{\tilde{X}\setminus C}. $$\end{definition}
\begin{definition}\label{de:weak-persistent}[\emph{Uniform Weak $\rho$-Persistence}] Let $\tilde{X}$ be a positively invariant subset of $X$. The semi-flow $H$ is called \emph{uniformly weakly $\rho$-persistent} in $\tilde{X}$ if there exists some $\epsilon>0$ such that
$$\limsup _{n\rightarrow\infty} \rho\left(H^n(\xi)\right)>\epsilon \mbox{ for any } \xi\in \tilde{X}$$
 where $\rho: X\rightarrow \mathbb R_+$ is a \modifyr{persistence function (e.g., $\rho(x,y)=x$ can be a persistence function if we want to study whether species $x$ is \emph{uniformly weakly persistent} or not)}.  We say species $x$ is \emph{uniformly weakly persistent} in $\tilde{X}$ if there exists a $\epsilon>0$ such that $$\limsup _{n\rightarrow\infty} x_n>\epsilon \mbox{ for any } \xi\in \tilde{X}.$$\end{definition}
\begin{definition}\label{de:persistent}[\emph{Uniform Persistence}] Let $\tilde{X}$ be a positively invariant subset of $X$. We say species $x$ is \emph{uniformly persistent} in $\tilde{X}$ if there exists some $\epsilon>0$ such that
$$\liminf _{n\rightarrow\infty} x_n >\epsilon \mbox{ for any } \xi\in \tilde{X}.$$
\end{definition}
\begin{definition}\label{de:permanence}[\emph{Permanence}] Let $\tilde{X}$ be a positively invariant subset of $X$. We say the system $H$ is \emph{permanent} in $\tilde{X}$ if there exists some $\epsilon>0$ such that
$$\liminf _{n\rightarrow\infty}\min\{ x_n,y_n\} >\epsilon \mbox{ for any } \xi\in \tilde{X}.$$
\end{definition}

\begin{definition}\label{de:relative-p}[\emph{Relative Permanence}] We say the system $H$ is \emph{relative permanent} in $X$ if there exists some $\epsilon>0$ such that $\liminf_{n\rightarrow\infty} \min\{x_n,y_n\}>\epsilon$ for almost all initial condition taken in $X$ (i.e., all initial conditions in $X$ except a Lebesgue measure zero set).
\end{definition}

\begin{lemma}\label{l1:pi}[Compact Positively Invariant Set]
Assume that $r_1\neq r_2$, then for any $$0<\epsilon\leq\min\{r_1, r_2, e^{2r_2-1-e^{r_2-1}},r_1e^{r_2-r_1}\}=r^m,$$the compact region defined by
$$D_\epsilon=\left\{(x,y)\in X: \epsilon\leq\,x+y\leq \max\{r_1, e^{r_2-1}\}\right\}$$is positively invariant and attracts all points in $X$.\end{lemma}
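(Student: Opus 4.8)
The plan is to reduce the planar map $H$ to a scalar recursion for the total population and to extract the two bounds defining $D_\epsilon$, as well as the attraction statement, from a single convexity estimate. Write $s_n:=x_n+y_n$ (so $s_n>0$ throughout $X$) and $\theta_n:=x_n/s_n\in[0,1]$, and set $h(s):=s\,e^{r_2-s}$. Adding \eqref{gx0} and \eqref{gy0} and substituting $x_n=\theta_n s_n$, $y_n=(1-\theta_n)s_n$ gives the key relation
\[
 s_{n+1}=r_1\theta_n+(1-\theta_n)\,h(s_n),
\]
so $s_{n+1}$ is always a convex combination of the constant $r_1$ and a value of the Ricker-type map $h$. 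Two elementary properties of $h$ carry everything: $h$ is unimodal, strictly increasing on $(0,1]$ and strictly decreasing on $[1,\infty)$ with maximum $h(1)=e^{r_2-1}$, so $\min_{[\alpha,\beta]}h=\min\{h(\alpha),h(\beta)\}$ for $0<\alpha\le\beta$; and $h(s)>s$ precisely on $(0,r_2)$.

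\emph{Positive invariance.} Since $h\le e^{r_2-1}$ everywhere, the key relation exhibits $s_{n+1}$ as a convex combination of $r_1$ and a number $\le e^{r_2-1}$, so $s_{n+1}\le M:=\max\{r_1,e^{r_2-1}\}$ for every $n\ge0$; the upper bound holds with no hypothesis on $\epsilon$. For the lower bound, suppose $\epsilon\le s_n\le M$. By unimodality $h(s_n)\ge\min\{h(\epsilon),h(M)\}$, and the four terms in $r^m$ are exactly what is needed: $\epsilon\le r_2$ is the same as $h(\epsilon)=\epsilon e^{r_2-\epsilon}\ge\epsilon$; $\epsilon\le r_1e^{r_2-r_1}=h(r_1)$ and $\epsilon\le e^{2r_2-1-e^{r_2-1}}=h(e^{r_2-1})$ give $h(M)\ge\epsilon$ in the two cases $M=r_1$ and $M=e^{r_2-1}$; and $\epsilon\le r_1$ controls the $r_1$-leg of the combination. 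Hence $s_{n+1}=r_1\theta_n+(1-\theta_n)h(s_n)\ge\epsilon$, and $D_\epsilon$ (nonempty because $\epsilon\le r_1\le M$) is positively invariant.

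\emph{Attraction.} Fix $\xi_0\in X$. By the persistence result of Kang (submitted to JDEA), the orbit is attracted to a compact set bounded away from the origin, so the $\omega$-limit set $\omega(\xi_0)$ is nonempty; together with $s_n\le M$ for $n\ge1$ this makes it a compact invariant subset of $\{0<s\le M\}\subset X$. (If one prefers a self-contained argument, $\liminf_n s_n>0$ follows directly: whenever $s_n<\epsilon$ the key relation and $h(s)>s$ on $(0,r_2)$ force $s_{n+1}>s_n$, so once the orbit dips below $\epsilon$ it never falls below that level again.) It remains to show $\omega(\xi_0)\subset D_\epsilon$, and since $\omega(\xi_0)\subset\{s\le M\}$ already, we need only $s\ge\epsilon$ on $\omega(\xi_0)$. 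Let $\zeta$ minimize $s$ on the compact set $\omega(\xi_0)$, and, by invariance, choose $\zeta'\in\omega(\xi_0)$ with $H(\zeta')=\zeta$; put $m:=s(\zeta)$, $s':=s(\zeta')\ge m$, $\theta':=\theta(\zeta')$, so $m=r_1\theta'+(1-\theta')h(s')$. If $s'\ge\epsilon$: then $\epsilon\le s'\le M$ and the positive-invariance estimate gives $h(s')\ge\epsilon$, hence $m\ge\epsilon$. If $s'<\epsilon\,(\le\min\{r_1,r_2\})$: then $r_1>m$ and $h(s')>s'\ge m$, so the convex combination $m=r_1\theta'+(1-\theta')h(s')$ must exceed $m$ — a contradiction. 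Either way $m\ge\epsilon$, so $\omega(\xi_0)\subset D_\epsilon$ and $d(H^n(\xi_0),D_\epsilon)\to0$.

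The whole argument rests on one inequality, used twice — for forward invariance and for the minimum of $s$ over the limit set — so the only place that needs genuine care is the passage to $\omega(\xi_0)$: knowing it is a nonempty, compact, invariant subset of $X$. This is precisely where the persistence result of Kang (submitted to JDEA) is invoked (or the short monotonicity remark in parentheses above). The remaining task is the bookkeeping that matches the four terms of $r^m$ to the four elementary bounds $\epsilon\le r_1$, $\epsilon\le r_2$, $\epsilon\le h(r_1)$, $\epsilon\le h(e^{r_2-1})$; everything else reduces to the convexity of $(\theta,s)\mapsto r_1\theta+(1-\theta)h(s)$ in $\theta$ and the unimodality of $h$.
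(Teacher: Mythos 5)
Your proof is correct, and it is worth noting that the paper itself contains no proof of this lemma: the Remark following Lemma \ref{l2:in_pi} defers it entirely to Theorems 2.2 and 3.3 of Kang (submitted to JDEA). So what you have written is a self-contained argument where the paper offers only a citation. The reduction to the scalar relation $s_{n+1}=r_1\theta_n+(1-\theta_n)h(s_n)$ with $h(s)=se^{r_2-s}$ is exactly the right device: the upper bound $\max\{r_1,e^{r_2-1}\}=h(1)\vee r_1$ is immediate, and the four terms of $r^m$ are precisely $r_1$, the threshold $r_2$ below which $h(s)\ge s$, and the two endpoint values $h(r_1)=r_1e^{r_2-r_1}$ and $h(e^{r_2-1})=e^{2r_2-1-e^{r_2-1}}$, so the endpoint analysis of the unimodal $h$ on $[\epsilon,M]$ closes the invariance step with no slack. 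For attraction, you should lean on your parenthetical monotonicity remark rather than on Kang's persistence theorem, since the latter would make the lemma rest on the very reference it is meant to replace; the inequality $s_{n+1}\ge\min\{s_n,\epsilon\}$ for $n\ge1$ gives $s_n\ge\min\{s_1,\epsilon\}>0$, so the orbit from time $1$ on lies in a compact subset of $X$, $\omega(\xi_0)$ is nonempty, compact and invariant, and your minimizer argument (both legs of the convex combination exceed $\min\{s',\epsilon\}$, so the minimum of $s$ on $\omega(\xi_0)$ cannot lie below $\epsilon$) is sound. Two harmless observations: the hypothesis $r_1\ne r_2$ is never used — your argument proves the statement without it, since even when $r_1=r_2$ the resulting segment of fixed points $\{x+y=r_1\}$ lies inside $D_\epsilon$ — and what you establish is attraction in the sense $d\bigl(H^n(\xi),D_\epsilon\bigr)\to0$, which is the standard meaning of the conclusion; the finite-time entry into $D_\epsilon$ that the proof of Theorem \ref{th1:ce} later invokes would need a small additional argument, but that is outside the lemma as stated.
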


\begin{lemma}\label{l2:in_pi}[Pre-images of Invariant Smooth Curve]
Assume that $$r_2>r_1>1\,\mbox{ and }\,e^{2r_2-1-e^{r_2-1}}>1.$$ Let $C$ be an invariant smooth curve of the system \eqref{gx0}-\eqref{gy0} and $M$ be any compact subset of $X$, then $m_2\left(EF_C\bigcap M\right)=0$ where $m_2$ is a Lebesgue measure in $\mathbb R^2$.
\end{lemma}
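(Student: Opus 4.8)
The plan is to exhibit $EF_C$ as a countable union of Lebesgue-null subsets of the plane, so that $m_2(EF_C)=0$ and therefore $m_2(EF_C\cap M)=0$ for \emph{every} $M\subseteq X$; the compactness of $M$ will in fact play no role. Writing $H^{-k}$ for the full preimage under the $k$-th iterate, one has $EF_C\subseteq\bigcup_{k\ge0}H^{-k}(C)$, so by countable subadditivity it suffices to prove $m_2\bigl(H^{-k}(C)\bigr)=0$ for every $k\ge0$. By induction this reduces to two facts: that $C$ itself is null, and that $H$ pulls $m_2$-null sets back to $m_2$-null sets.

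That $m_2(C)=0$ is immediate: a smooth curve is a one-dimensional $C^1$ (immersed) submanifold of $\mathbb R^2$, hence a countable union of compact $C^1$ arcs of finite length, so it is a Borel set of two-dimensional measure zero; continuity of $H$ then makes every $H^{-k}(C)$ Borel as well.

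The substantive step is that $m_2(S)=0$ implies $m_2\bigl(H^{-1}(S)\bigr)=0$ for Borel $S$. I would first record that $H$ is $C^\infty$ on the open set $X$ (its only denominator, $x+y$, is positive there) and compute
\[
\det DH(x,y)=\frac{r_1\,y\,e^{\,r_2-x-y}}{(x+y)^2}\,(1-x-y),
\]
so the critical set $J:=\{\det DH=0\}\cap X$ is contained in the union of the two lines $\{y=0\}$ and $\{x+y=1\}$ and is therefore itself $m_2$-null. Now split $H^{-1}(S)=\bigl(H^{-1}(S)\cap J\bigr)\cup\bigl(H^{-1}(S)\setminus J\bigr)$. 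The first piece lies in $J$, hence is null. On the open set $X\setminus J$ the Jacobian is invertible, so by the inverse function theorem every point has an open neighbourhood on which $H$ is a $C^\infty$ diffeomorphism onto its (open) image; cover $X\setminus J$ by countably many such neighbourhoods $V_i$ (possible since $\mathbb R^2$ is second countable). On each $V_i$ one has $H^{-1}(S)\cap V_i=(H|_{V_i})^{-1}\bigl(S\cap H(V_i)\bigr)$, and the inverse map $(H|_{V_i})^{-1}$, being $C^1$ hence locally Lipschitz, carries null sets to null sets; thus $m_2\bigl(H^{-1}(S)\cap V_i\bigr)=0$ and, summing, $m_2\bigl(H^{-1}(S)\setminus J\bigr)=0$. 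Hence $m_2\bigl(H^{-1}(S)\bigr)=0$, and the induction goes through, giving $m_2(EF_C)=0$.

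I expect the only delicate point — and the spot a referee would probe — to be precisely this interplay with the critical set $J$: the argument succeeds only because the singular locus of $H$ is one-dimensional (the two lines $y=0$ and $x+y=1$), so that the failure of local invertibility of $H$ there is invisible to $m_2$. It is worth double-checking the Jacobian computation and confirming that the invariant curve $C$ furnished by the earlier analysis is genuinely a $C^1$ one-manifold rather than a wilder set. The standing hypotheses $r_2>r_1>1$ and $e^{2r_2-1-e^{r_2-1}}>1$ are not used in the measure estimate itself; they serve only to place us in the regime of Lemma~\ref{l1:pi} and to guarantee the existence and regularity of $C$.
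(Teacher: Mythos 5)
Your argument is correct, and it is worth noting that the paper itself offers no proof of this lemma: the remark following it simply declares Lemmas \ref{l1:pi} and \ref{l2:in_pi} to be corollaries of Theorems 2.2 and 3.3 of Kang (submitted to JDEA), so you have supplied a self-contained proof where the paper only cites one. Your route is the standard one and it goes through: the Jacobian computation $\det DH(x,y)=r_1\,y\,e^{r_2-x-y}(1-x-y)/(x+y)^2$ is right, so the critical set is contained in $\{y=0\}\cup\{x+y=1\}$ and is $m_2$-null; off that set $H$ is a local $C^\infty$ diffeomorphism, a second-countable cover plus the locally Lipschitz local inverses gives that full preimages of null Borel sets are null, and countable subadditivity over $k$ finishes it. Two small points deserve the care you already flag. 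First, the collapse of the entire $x$-axis to the single point $(r_1,0)$ under $H$ means $H^{-1}(C)$ can contain a full one-dimensional piece whenever $(r_1,0)\in C$ (as it does for the heteroclinic curve used later); your decomposition absorbs this into the null critical set, which is exactly why the argument survives. Second, the regularity of $C$ is a hypothesis of the lemma here, but when the lemma is invoked in Lemma \ref{l3:measure0} the curve is the closure of a union of heteroclinic orbits, and one should verify (as that lemma attempts via the stable manifold theorem) that this closure really is a countable union of $C^1$ arcs so that $m_2(C)=0$. Your observation that the standing hypotheses $r_2>r_1>1$ and $e^{2r_2-1-e^{r_2-1}}>1$ play no role in the measure-theoretic statement is accurate; they matter only for the dynamical setting in which the lemma is applied.
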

\textbf{Remark:} Lemma \ref{l1:pi} and \ref{l2:in_pi} {are a direct corollary from} Theorem 2.2 and 3.3 in Kang (submitted to JDEA).

\section{Sufficient conditions for persistence}\label{sec:ce}
In this section we investigate sufficient conditions for the extinction of one species and the persistence of the other species 
{in} system \eqref{gx0}-\eqref{gy0}. Let $D_\epsilon$ be the set defined in Lemma \ref{l1:pi} and denote $\mathring{D}_\epsilon$ as the 
interior of ${D}_\epsilon$. We can obtain sufficient conditions {for} the extinction of one species by using a Lyapunov function 
$V: \, \mathring{D}_\epsilon\rightarrow \mathbb R_+$ where $V(x,y)= x^c y^d$ and $c, d$ are some {constants}. 
In addition, we give a sufficient condition on the persistence of species $y$ by applying Theorem 2.2 and its corollary of 
Hutson (Hutson 1984) through defining an average Lyapunov function $P(x,y)=y$ in the compact positively invariant region $D_\epsilon$. 
Now we are going to give detailed proof of the following theorem:
 \begin{theorem}\label{th1:ce}[Persistence of One Species]\begin{enumerate}
\item If $r_1>r_2>0$, then the system \eqref{gx0}-\eqref{gy0} has global stability at $(r_1,0)$, i.e., for any initial condition $\xi_0=(x_0,y_0)\in \{(x,y)\in X: x_0>0\}$, we have
$$\lim_{n\rightarrow \infty} H^n(\xi_0)=\lim_{n\rightarrow \infty} H^n(x,y)=(r_1,0).$$
\item If $0<r_1<r_2$, then the species $y$ is {uniformly persistent} in $X$, i.e., {there exists a positive number $\delta>0$ such that for any initial condition $\xi_0=(x_0,y_0)\in \{(x,y)\in X: y>0\}$, we have}
$$\liminf_{n\rightarrow\infty}y_n\geq \delta$$ where $(x_n,y_n)=H^n(\xi_0)$.
Moreover, if $\frac{e^{2r_2-1-e^{r_2-1}}}{r_1}>1$, then the species $x$ goes to extinct for any $\xi_0=(x_0,y_0)\in \{(x,y)\in X: y>0\}$, i.e.,
$$\lim_{n\rightarrow\infty}x_n=0.$$
\end{enumerate}
\end{theorem}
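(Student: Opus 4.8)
The plan is to prove the two halves with the two tools the paper advertises: for the competitive‑exclusion statements, a Lyapunov function of the form $V(x,y)=x^{c}y^{d}$, and for the persistence statement, Hutson's average Lyapunov function criterion applied on the compact attractor $D_\epsilon$ of Lemma~\ref{l1:pi}. For part (1) ($r_1>r_2$) I would take $V(x,y)=y\,x^{-r_1}$ (the case $c=-r_1$, $d=1$). Writing $s_n=x_n+y_n$, a direct substitution gives $V_{n+1}/V_n=(s_n/r_1)^{r_1}e^{\,r_2-s_n}$, whose logarithm is $\bigl(r_1\ln s_n-s_n\bigr)-\bigl(r_1\ln r_1-r_2\bigr)$. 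Since $s\mapsto r_1\ln s-s$ is maximized at $s=r_1$ with value $r_1\ln r_1-r_1$, this yields $V_{n+1}/V_n\le e^{\,r_2-r_1}<1$ for all $n$, so $V_n\to 0$ geometrically. Because $x_n\le r_1$ for $n\ge1$, it follows that $y_n=V_n x_n^{r_1}\to0$; and since $\liminf_n(x_n+y_n)\ge r^m>0$ by Lemma~\ref{l1:pi}, we get $x_n\ge c_0>0$ eventually, whence $x_{n+1}=r_1/(1+y_n/x_n)\to r_1$. Orbits on $\{y=0\}$ reach $(r_1,0)$ in one step, so they need no separate treatment.

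For the persistence of $y$ in part (2) ($r_1<r_2$), I would apply Hutson's Theorem~2.2 and its corollary (Hutson 1984) on the compact positively invariant $D_\epsilon$, with $P(x,y)=y$. The ratio $P(H(x,y))/P(x,y)=e^{\,r_2-(x+y)}$ extends continuously across the extinction face $S=D_\epsilon\cap\{y=0\}$, which is positively invariant with internal dynamics collapsing onto the single fixed point $(r_1,0)$, so acyclicity and isolatedness are automatic. Along $S$ one has $\prod_{k=0}^{n-1}e^{\,r_2-x_k}=e^{\,r_2-x_0}\,(e^{\,r_2-r_1})^{n-1}\to\infty$ because $e^{\,r_2-r_1}>1$, so Hutson's criterion gives a $\delta>0$ with $\liminf_{n\to\infty}y_n\ge\delta$ for every $\xi_0$ with $y_0>0$ (using again that $D_\epsilon$ attracts all of $X$).

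For the extinction of $x$ under the extra hypothesis $e^{2r_2-1-e^{r_2-1}}/r_1>1$, the first step is to observe that this, together with $r_1<r_2$, makes the constant $r^m$ of Lemma~\ref{l1:pi} equal to $r_1$, since then each of $r_2$, $e^{2r_2-1-e^{r_2-1}}$ and $r_1e^{r_2-r_1}$ exceeds $r_1$. Hence $D_{r_1}$ is positively invariant and absorbing, so eventually $x_n+y_n\ge r_1$, and then $x_{n+1}=r_1x_n/(x_n+y_n)\le x_n$, so $x_n\downarrow x_\infty\ge0$. If $x_\infty>0$, then $x_{n+1}/x_n=r_1/(x_n+y_n)\to1$ forces $x_n+y_n\to r_1$, hence $y_n\to r_1-x_\infty$, which is positive by the previous paragraph; letting $n\to\infty$ in $y_{n+1}=y_ne^{\,r_2-(x_n+y_n)}$ gives $1=e^{\,r_2-r_1}$, contradicting $r_2>r_1$. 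Thus $x_\infty=0$. (Equivalently, one can rerun the Lyapunov machinery of part (1): with $d=-1$ and $c$ chosen so large that $e^{r_2-1}-r_2<c\,(r_2-1-\ln r_1)$, the function $V=x^{c}y^{-1}$ satisfies $V_{n+1}/V_n\le e^{-\kappa}<1$ once $x_n+y_n\in[r_1,e^{r_2-1}]$, whence $V_n\to0$ and $x_n^{c}\le V_n y_n\le V_n e^{r_2-1}\to0$.)

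The Lyapunov computations and the one‑variable maximization $\max_{s>0}(r_1\ln s-s)=r_1\ln r_1-r_1$ are routine. The main obstacle is the extinction claim: the hypotheses only deliver that $x_n$ is eventually non‑increasing, and the content lies in forcing its limit to be $0$ --- a positive limit would pin $x_n+y_n$ at exactly $r_1$ and hence produce the impossible identity $e^{r_2-r_1}=1$. A secondary care point is that Lemma~\ref{l1:pi} is used here as an eventual‑containment (absorbing) statement rather than mere attraction --- or, if only attraction is available, via the fact that the time‑average of $x_n+y_n$ equals $r_2>r_1$ while $\liminf_n(x_n+y_n)\ge r_1$, which by concavity of $\log$ still forces $x_n=x_0\,r_1^{\,n}/\prod_{k<n}(x_k+y_k)\to0$ --- and that Hutson's theorem must be checked on the compact $D_\epsilon$ rather than on the non‑compact $X$, which is painless here only because the boundary flow reduces to the single fixed point $(r_1,0)$.
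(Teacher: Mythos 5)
Your treatment of part (1) and of the uniform persistence of $y$ is essentially identical to the paper's: the same Lyapunov function $V=x^{-r_1}y$ with the same one-variable maximization giving the ratio bound $e^{r_2-r_1}<1$, and the same appeal to Hutson's average-Lyapunov criterion with $P(x,y)=y$ on $D_\epsilon$, using that $\omega(S_x)=\{(r_1,0)\}$ and that the transverse eigenvalue $e^{r_2-r_1}$ exceeds $1$. Where you genuinely diverge is the extinction of $x$: the paper again uses a Lyapunov quotient, $V=xy^{-1}$, and bounds $V(H(\xi))/V(\xi)\le r_1/\min_{u\in[r_1,e^{r_2-1}]}ue^{r_2-u}=r_1/e^{2r_2-1-e^{r_2-1}}<1$ on $D_{r_1}$, concluding $x_n=V_ny_n\to0$ since $y_n$ is bounded. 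You instead observe that once $x_n+y_n\ge r_1$ the sequence $x_n$ is non-increasing, and rule out a positive limit by showing it would force $x_n+y_n\to r_1$ and hence the impossible identity $e^{r_2-r_1}=1$ (using the already-established persistence of $y$ to ensure $y_\infty>0$). Your argument is more elementary and, interestingly, uses the hypothesis $e^{2r_2-1-e^{r_2-1}}>r_1$ only to guarantee that $\epsilon=r_1$ is admissible in Lemma \ref{l1:pi} (so that $D_{r_1}$ is positively invariant and $x_n+y_n\ge r_1$ eventually), whereas the paper's geometric-decay bound uses that hypothesis quantitatively and gives a rate; both arguments share the same reliance on orbits actually entering $D_{r_1}$ in finite time, a point the paper handles by noting that $(r_1,0)$ is transversally unstable and which you correctly flag as the place where ``attracts'' must be upgraded to absorption. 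Your parenthetical Lyapunov variant $V=x^cy^{-1}$ is a mild generalization of the paper's choice $c=1$ and would also work. I see no gap.
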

\begin{proof}
According to Lemma \ref{l1:pi}, any point in $X$ is attracted to the compact positively invariant set $D_\epsilon$ for any $\epsilon \in (0, r^m]$. Therefore, we can restrict the dynamics of \eqref{gx0}-\eqref{gy0} to $D_\epsilon$.

If $r_1>r_2>0$, define $V(x,y)=x^{-r_1}y$, then
$$\frac{V\left(H(x,y)\right)}{V(x,y)}=r_1^{-r_1}(x+y)^{r_1}e^{r_2-x-y}.$$
Let $f(u)=r_1^{-r_1} u^{r_1}e^{r_2-u}$. Since $f'(u)=r_1^{-r_1}u^{r_1-1}(r_1-u )e^{r_2-u}$, we can conclude that the maximum value of $f(u)$ achieves at $u=1$, i.e., $$\max_{\epsilon\leq u\leq K} \{f(u)\}=f(r_1)=(\frac{r_1}{r_1})^{r_1} e^{r_2-r_1}<1\mbox{ where }K>\max\{r_1, r_2, e^{r_2-1},1\}.$$ According to Lemma \ref{l1:pi}, we know that $D_\epsilon$ is positively invariant and attracts all points in $X$. Therefore, any point in the region $\{(x_0,y_0)\in D_\epsilon:\, x_0>0\}$ has the following two situations\begin{enumerate}
\item If $y_0=0$, then
 $V\left(H^n(x_0,y_0)\right)=V\left((x_n,y_n)\right)=0$ or
 \item If \modifyb{$x_0,\,y_0>0$}, then
\bea
\frac{V\left(H(x_0,y_0)\right)}{V(x_0,y_0)}&\leq& \max_{(x,y)\in D_\epsilon}\{r_1^{-r_1}(x+y)^{r_1}e^{r_2-x-y}\}\\
&=&\max_{\epsilon\leq u\leq K}\{r_1^{-r_1}u^{r_1}e^{r_2-u}\}=(\frac{r_1}{r_1})^{r_1} e^{r_2-r_1}<1.\eea
\end{enumerate}Thus,
$$\frac{V\left(H^n(x_0,y_0)\right)}{V(x_0,y_0)}=\frac{V\left(H^n(x_0,y_0)\right)}{V^{n-1}(x_0,y_0)}\cdot\cdot\cdot\frac{V\left(H(x_0,y_0)\right)}{V(x_0,y_0)}=\left[(\frac{r_1}{r_1})^{r_1} e^{r_2-r_1}\right]^n\rightarrow 0 \mbox{ as } n\rightarrow \infty.$$ Therefore, the positively invariant property of $D_\epsilon$ implies that
$$\lim_{n\rightarrow\infty} x_n^{-r_1}y_n=0.$$Therefore, $$\lim_{n\rightarrow\infty}y_n=0 \mbox{ and }\liminf_{n\rightarrow\infty}x_n\geq\epsilon.$$
This indicates that
$$\lim_{n\rightarrow\infty} x_{n+1}=\lim_{n\rightarrow\infty} \frac{r_1x_n}{x_n+y_n}=\lim_{n\rightarrow\infty} \frac{r_1}{1+y_n/x_n}=r_1.$$
Therefore, if $r_1>r_2>0$, then the system \eqref{gx0}-\eqref{gy0} has global stability at $\xi^*=(r_1,0)$. The first part of Theorem \ref{th1:ce} holds.

\indent If $r_2>r_1>0$, then the omega limit set of $S_x=\{(x,0): x>0\}$ is $\xi^*$, i.e., $\omega(S_x)=\{\xi^*\}$.  The external Lyapunov exponent of $S_x$ is $e^{r_2-r_1}>1$, therefore, it is transversal unstable.  According to Lemma \ref{l1:pi}, for any $0<\epsilon\leq r^m$, $D_\epsilon$ attracts all points in $X$ . Thus, the uniform persistence of species $y$ follows from Theorem 2.2 and its corollary of Hutson (Hutson 1984) by defining a Lyapunov function $P(x,y)=y$ on the compact positively invariant region $D_\epsilon$, i.e., there exists a positive number $\delta>0$ such that for any $y_0>0$, we have
$$\liminf_{n\rightarrow\infty}y_n>\delta.$$
If, in addition, $r_1<e^{2r_2-1-e^{r_2-1}}$, then we can define a Lyapunov function as $V(x,y)=xy^{-1}$, then we have
$$\frac{V\left(H(x,y)\right)}{V(x,y)}=\frac{r_1}{(x+y)e^{r_2-(x+y)}}.$$
Now choose $\epsilon=\min\{r_1,r_2, e^{2r_2-1-e^{r_2-1}},r_1e^{r_2-r_1}\}$, then $\epsilon=r_1$ since $r_2<r_1<e^{2r_2-1-e^{r_2-1}}$. Therefore, any point $(x_0,y_0)\in D_{r_1}$ satisfies $r_1\leq x+y\leq e^{r_2-1}$ and will stay in $D_{r_1}$ for all future time. This implies that for any point $(x_0,y_0)$ in $D_{r_1}$ with $y_0>0$, we have
\bea
\frac{V\left(H(x_0,y_0)\right)}{V(x_0,y_0)}&\leq& \max_{(x,y)\in D_{r_1}}\{\frac{r_1}{(x+y)e^{r_2-(x+y)}}\}=\frac{r_1}{\min_{(x,y)\in D_{r_1}}\{(x+y)e^{r_2-(x+y)}\}}\\
&=&\frac{r_1}{\min_{r_1<u<e^{r_2-1}}\{u e^{r_2-u}\}}=\frac{r_1}{e^{2r_2-1-e^{r_2-1}}}<1\eea
Hence, $\lim_{n\rightarrow\infty}x_n=0.$  Now if $(x_0,y_0)\in X\setminus D_{r_1}$, then according to Lemma \ref{l1:pi}, $(x_0,y_0)$ will either enter $D_{r_1}$ in some finite time or converge to $(r_1,0)$.  Now we consider the following two cases for any initial condition $(x_0,y_0)\in X\setminus D_{r_1}$ with $y_0>0$:
\begin{enumerate}
\item If $x_0=0$, then $x_n=0$ for all positive integer $n$;
\item If $x_0>0$, then $(x,y)$ will not converge to $(r_1,0)$ since the equilibrium point $(r_1,0)$ is a saddle and transversal unstable when $r_2>r_1$, therefore,  $(x,y)$ will
enter $D_{r_1}$ in some finite time.
\end{enumerate}
Thus, the condition $r_1<r_2$ and $r_1<e^{2r_2-1-e^{r_2-1}}$ guarantees that
$$\lim_{n\rightarrow\infty}x_n=0.$$
Therefore, the second part of Theorem \ref{th1:ce} holds.
\end{proof}
\noindent \textbf{Remark:} The first part of Theorem \ref{th1:ce} can be considered as a special case of rational growth rate dominating {exponential (Franke and Yakubu 1991) which} states that if species \modifyb{$x$ with rational growth rate can invade species $y$ with exponential growth rate at species $y$'s fixed point, i.e., $(0, r_2)$ is transversal unstable}, then the exponential species goes extinct irrespective of the initial population sizes. The second part of Theorem \ref{th1:ce} shows that the exponential species can persistent whenever $(r_1,0)$ is \modifyb{transversal} unstable (i.e., $r_2>r_1$). However, the rational species may not go extinct unless $r_1< e^{2 r_2-1-e^{r_2-1}}$. In fact, simulations (e.g., Figure \ref{basind}) suggest that two species of the system \eqref{gx0}-\eqref{gy0} may coexist for almost every initial conditions in $X$ under certain conditions. This {point will be illustrated with greater details in} the next section.


\section{Coexistence of two species}\label{sec:rp}
In this section, we {give sufficient conditions for the} existence of the interior period-2 orbits and its local stability for the system \eqref{gx0}-\eqref{gy0} as the following theorem  states:
\begin{theorem}\label{th2:bifurcation}[Sufficient conditions on the existence of interior period-2 orbits] If $r_2>2$, then the Ricker map $y_{n+1}=y_n e^{r_2-y_n}$ has period two orbits $\{y_1,y_2\}$ where $0<y_1<r_2<y_2$ and $y_1+y_2=2r_2$.  {The system \eqref{gx0}-\eqref{gy0} has an interior period-2 orbit $P^i_2=\{(x_1^i,y_1^i),(x_2^i,y_2^i)\}$ where
\bae\label{p2}
\begin{array}{lcl}
x_1^i=\frac{s_1(s_1 e^{r_2-s_1}-s_2)}{s_1e^{r_2-s_1}-r_1},&& \,\,y_1^i=\frac{s_1(s_2-r_1)}{s_1e^{r_2-s_1}-r_1}\\
x_2^i=\frac{r_1x_1^i}{s_1},&& \,\,y_2^i=y_1^i e^{r_2-s_1}\\
s_1=x_1^i+y_1^i=r_2-\sqrt{r_2^2-r_1^2},&& \,\, s_2=x_2^i+y_2^i=r_2+\sqrt{r_2^2-r_1^2}
\end{array}
\eae 
if one of the follows holds
\begin{enumerate}
\item$s_1 e^{r_2-s1}>s_2$, or
\item$r_2-\sqrt{r_2^2-r_1^2}>y_1$, or
\item$2\leq r_1<r_2<2.5$ and $r_1>r_2-\frac{(\frac{r_2-2}{0.26})^2}{2r_2}$, or
\item$2.085\leq r_1\leq r_2\leq 2.5$
\end{enumerate} In particular, (4) implies (3); (3) implies (2) and (2) implies (1).
Moreover, if $r_1=2$ and $\delta=r_2-r_1=r_2-2$ is small enough, then $P^i_2$ is locally asymptotically stable.}
\end{theorem}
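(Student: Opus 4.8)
\emph{Plan: construct $P_2^i$ explicitly, then check its stability via the Schur--Cohn conditions.} For the Ricker statement I would set $y_1=r_2-t$, $y_2=r_2+t$, so that $y_2=y_1e^{r_2-y_1}$ becomes $h(t):=(r_2-t)e^t-(r_2+t)=0$; since $h(0)=0$, $h'(0)=r_2-2>0$ and $h(r_2)=-2r_2<0$, the intermediate value theorem gives a root $t^\ast\in(0,r_2)$, hence $0<y_1<r_2<y_2$ with $y_1+y_2=2r_2$. For the planar cycle I would work from the ansatz $P_2^i=\{(x_1^i,y_1^i),(x_2^i,y_2^i)\}$, $s_i:=x_i^i+y_i^i$, and impose $(x_2^i,y_2^i)=H(x_1^i,y_1^i)$, $(x_1^i,y_1^i)=H(x_2^i,y_2^i)$: multiplying the two $x$-relations gives $s_1s_2=r_1^2$, multiplying the two $y$-relations gives $s_1+s_2=2r_2$, so $s_1,s_2=r_2\mp\sqrt{r_2^2-r_1^2}$ (real and distinct when $r_1<r_2$). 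Substituting $y_1^i=s_1-x_1^i$ into $s_2=\frac{r_1x_1^i}{s_1}+(s_1-x_1^i)e^{r_2-s_1}$ determines $x_1^i$ and yields the displayed formulas, and conversely --- using only $s_1s_2=r_1^2$ and $s_1+s_2=2r_2$ --- one checks that $H^2$ fixes $(x_1^i,y_1^i)$. For interiority, $s_2=r_2+\sqrt{r_2^2-r_1^2}>r_1$, so the numerator $s_1(s_2-r_1)$ of $y_1^i$ is positive, whence $x_1^i,y_1^i>0$ exactly when the common denominator $s_1e^{r_2-s_1}-r_1$ and the remaining numerator factor $s_1e^{r_2-s_1}-s_2$ are positive --- that is, exactly when $s_1e^{r_2-s_1}>s_2$, which is condition (1).

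\emph{The chain (4)$\Rightarrow$(3)$\Rightarrow$(2)$\Rightarrow$(1).} Using $s_1+s_2=2r_2$, condition (1) reads $\psi(s_1)>2r_2$ with $\psi(y):=ye^{r_2-y}+y$. A short calculus argument (the equation $(y-1)e^{r_2-y}=1$ has exactly two roots when $r_2>2$) shows $\psi$ is increasing--decreasing--increasing with turning points in $(1,2)$ and in $(2,\infty)$; since $\psi(y_1)=y_1+y_2=2r_2$, $\psi(r_2)=2r_2$ and $\psi(y_2)=2r_2$, the three points $y_1<r_2<y_2$ lie one in each monotone branch, so $\psi>2r_2$ on all of $(y_1,r_2)$. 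As $s_1<r_2$ always and condition (2) is exactly $s_1>y_1$, this places $s_1\in(y_1,r_2)$ and gives (1). I would also record $s_1>y_1\iff r_1^2>y_1y_2$, identifying (2) with the invasion condition \textbf{C2}. The steps (3)$\Rightarrow$(2), (4)$\Rightarrow$(3) are elementary but fiddly: via $y_1y_2=r_2^2-(t^\ast)^2$ they reduce to a sufficiently sharp lower bound on $t^\ast$ for $2<r_2<2.5$, extracted from $t^\ast=\ln\frac{r_2+t^\ast}{r_2-t^\ast}$ and convexity of $t\mapsto\ln\frac{r_2+t}{r_2-t}$ (this is the source of the explicit constant in (3)), after which (4)$\Rightarrow$(3) is a one-variable inequality over the box $2.085\le r_1\le r_2\le 2.5$.

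\emph{Local stability ($r_1=2$, $\delta:=r_2-2$ small).} Stability of $P_2^i$ is governed by the spectrum of $A:=DH(x_2^i,y_2^i)\,DH(x_1^i,y_1^i)$; I would verify the Schur--Cohn conditions $\det A<1$, $1+\mathrm{tr}\,A+\det A>0$, $1-\mathrm{tr}\,A+\det A>0$. From $\det DH(x,y)=\frac{r_1y\,e^{r_2-x-y}}{(x+y)^2}(1-x-y)$ and $s_1s_2=r_1^2$ one gets $\det A=\frac{y_1^iy_2^i\,(1-2r_2+r_1^2)}{r_1^2}$, while expanding $\mathrm{tr}\,A$ and simplifying with $e^{r_2-s_1}=y_2^i/y_1^i$, $e^{r_2-s_2}=y_1^i/y_2^i$, $r_1x_2^i=s_2x_1^i$, $r_1x_1^i=s_1x_2^i$ collapses the trace to the identity
\[
1-\mathrm{tr}\,A+\det A=(s_2-s_1)\Bigl(\frac{y_2^i}{s_2}-\frac{y_1^i}{s_1}\Bigr)=(s_2-s_1)\,x_1^i\,\frac{r_1-s_1}{r_1s_1}>0,
\]
the positivity holding for every interior $2$-cycle with $0<r_1<r_2$ (since $s_2>s_1$ and $r_1-s_1>0\iff r_1>0$). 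Now set $r_1=2$, $\delta>0$ small. First, $y_1y_2<4$ for $r_2$ near $2$, so (2) and hence (1) hold and $P_2^i$ exists; moreover $x_i^i>0$, $y_i^i>0$ force $y_1^iy_2^i<r_1^2=4$, and $1-2r_2+r_1^2=5-2r_2\in(0,1)$, so $0<\det A<1$. Finally $1+\mathrm{tr}\,A+\det A=2(1+\det A)-(1-\mathrm{tr}\,A+\det A)\to 2(1+1)-0=4>0$ as $\delta\to0^+$, because $\det A\to1$ while $s_2-s_1=2\sqrt{r_2^2-r_1^2}\to0$ and $x_1^i\to0$. All three conditions thus hold for $\delta$ small, so $P_2^i$ is locally asymptotically stable.

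\emph{Main obstacle and a sanity check.} The two delicate points are the algebraic reduction of $\mathrm{tr}\,A$ to the clean identity above --- which leans repeatedly on $r_1^2=s_1s_2$, $s_1+s_2=2r_2$ and the relations between $x_1^i,x_2^i$ and between $y_1^i,y_2^i$ --- and the quantitative estimate of $t^\ast$ underpinning (3)$\Rightarrow$(2). The stability conclusion is plausible a priori: as $\delta\to0^+$ the interior cycle $P_2^i$ collapses onto the boundary Ricker $2$-cycle $\{(0,y_1),(0,y_2)\}$, whose transverse multiplier $r_1^2/(y_1y_2)>1$ forces $P_2^i$ to inherit stability through an exchange of stability at the transcritical crossing --- a convenient check on the signs in the computation.
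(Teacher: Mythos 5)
Your proposal follows the same overall skeleton as the paper (explicit formulas for the cycle, reduction of everything to condition (1), Jury/Schur--Cohn test for stability), but two pieces are done by a genuinely different and, in my view, tighter argument. First, for (2)$\Rightarrow$(1) the paper relies on a ``graphic representation'' (Figure 5); your reformulation of (1) as $\psi(s_1)>2r_2$ with $\psi(y)=ye^{r_2-y}+y$, together with the observation that $\psi(y_1)=\psi(r_2)=\psi(y_2)=2r_2$ and that $\psi'$ changes sign exactly at the two roots of $(y-1)e^{r_2-y}=1$ (one in $(1,2)$, one above $r_2$ since $\psi'(r_2)=2-r_2<0$), turns that picture into a proof. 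Second, for stability the paper quotes Taylor coefficients of $\mathrm{trace}(J)$ and $\det(J)$ in $\delta$ without derivation, whereas your exact identities $\det A=\frac{y_1^iy_2^i(1-2r_2+r_1^2)}{r_1^2}$ and $1-\mathrm{tr}A+\det A=(s_2-s_1)\bigl(\tfrac{y_2^i}{s_2}-\tfrac{y_1^i}{s_1}\bigr)=(s_2-s_1)x_1^i\tfrac{r_1-s_1}{r_1s_1}$ both check out (using $s_1s_2=r_1^2$, $s_1+s_2=2r_2$, $r_1x_2^i=s_2x_1^i$, $e^{r_2-s_1}=y_2^i/y_1^i$, and $y_1^iy_2^i<s_1s_2$). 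This buys something the paper does not have: two of the three Jury conditions hold for \emph{every} interior $2$-cycle with $s_1<r_1<s_2$, and only $1+\mathrm{tr}A+\det A>0$ needs the small-$\delta$ limit. The derivation of $s_1s_2=r_1^2$ and $s_1+s_2=2r_2$ by multiplying the two iteration relations is also cleaner than simply positing \eqref{p2}.

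There is, however, one genuine gap: the implications (3)$\Rightarrow$(2) and (4)$\Rightarrow$(3) are only declared ``elementary but fiddly'' and reduced to ``a sufficiently sharp lower bound on $t^\ast$,'' which you never produce. This is the one place where your chosen route is actually harder than necessary: proving (3)$\Rightarrow$(2) forces you to estimate the Ricker $2$-cycle amplitude $t^\ast$ from below uniformly on $2<r_2<2.5$, since (2) is equivalent to $(t^\ast)^2>r_2^2-r_1^2$, and $t^\ast$ is only implicitly defined. The paper sidesteps $t^\ast$ entirely by going from (3) straight to (1): setting $a=\sqrt{r_2^2-r_1^2}$, condition (1) is equivalent to $r_2-a>\tfrac{2a}{e^a-1}$, and since $a\mapsto\tfrac{2a}{e^a-1}$ is decreasing and convex it is overestimated on $[0,\sqrt{10}/2]$ by the chord $2-0.74a$, after which (3) is exactly the resulting linear inequality and (4)$\Rightarrow$(3) is a one-variable maximization of $r_2-\tfrac{((r_2-2)/0.26)^2}{5}$. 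I would either adopt that direct route (noting that it establishes (3)$\Rightarrow$(1) rather than the stated (3)$\Rightarrow$(2), which in fact the paper never proves either) or actually supply the lower bound on $t^\ast$; as written, conditions (3) and (4) are not covered by your argument.
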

\begin{proof}If $r_2>r_1>0$, then $$r_2-r_1-\sqrt{r_2^2-r_1^2}=\sqrt{r_2-r_1}\left(\sqrt{r_2-r_1}-\sqrt{r_2+r_1}\right)<0,$$ thus we have the following inequalities:
$$s_1=r_2-\sqrt{r_2^2-r_1^2}<r_1<r_2<s_2=r_2+\sqrt{r_2^2-r_1^2}.$$
Therefore, from \eqref{p2}, we find that $s_1e^{r_2-s_1}-s_2>0$ is a sufficient condition for the existence of $P^i_2$.

Notice that the Ricker map $y_{n+1}=y_n e^{r_2-y_n}$ goes through period-doubling two bifurcation at $r=2$, thus if $r_2>2$, the Ricker map has a period-2 orbit $\{y_1,y_2\}$ where
$$0<y_1<r_2<y_2,\, y_2=y_1 e^{r_2-y_1}\,\mbox{ and }\,y_1+y_2=2r_2.$$
Since $s_1+s_2=2 r_2$,  then from the graphic representation (see Figure \ref{fig:p2}), we can see that
$$s_1e^{r_2-s_1}-s_2>0\mbox{ whenever }y_1<s_1<r_2.$$
Therefore, the condition $s_1=r_2-\sqrt{r_2^2-r_1^2}>y_1$ is a sufficient condition for $s_1e^{r_2-s_1}-s_2>0$, therefore, it is a sufficient condition for the existence of $P^i_2$.
\begin{figure}[ht]
\begin{center}
\includegraphics[width=100mm]{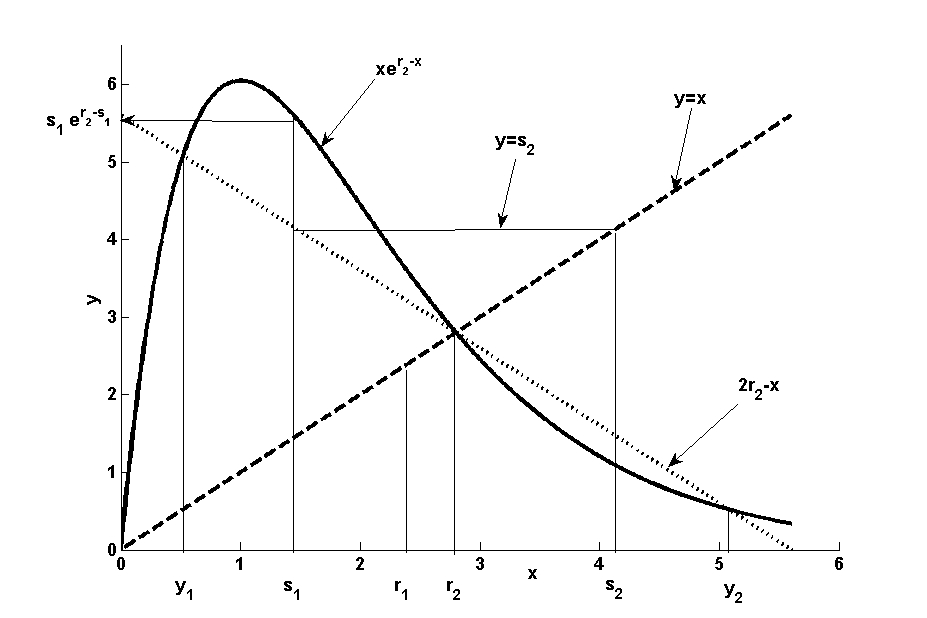}
 \caption{The location between $y_i,s_i, i=1,2$ and $s_1e^{r_2-s_1}$. The solid line is $f(y)=ye^{r_2-y}$; the dashed line is $f(y)=y$; the dot line is $f(y)=2 r_2-y$.}
 \label{fig:p2}
 \end{center}
\end{figure}

Let $a=\sqrt{r_2^2-r_1^2}$, then we have the following equivalent relationships:
\be\label{sq}s_1 e^{r_2-s_1}>s_2\modifyb{\iff}(r_2-a)e^a>r_2+a\modifyb{\iff} r_2>\frac{2a(e^a+1)}{e^a-1}=a+\frac{2a}{e^a-1}\modifyb{\iff} r_2-a>\frac{2a}{e^a-1}\ee
Thus we find that $r_2-a>\frac{2a}{e^a-1}$ implies $s_1e^{r_2-s_1}-s_2>0$.

If $2\leq r_1<r_2<2.5$, then $$0<a=\sqrt{r_2^2-r_1^2}=\sqrt{r_2-r_1}\sqrt{r_2+r_1}<\sqrt{0.5}\sqrt{(2.5+2.5)}=\frac{\sqrt{10}}{2}.$$
Notice that $h(a)=\frac{2a}{e^a-1}$ is a decreasing convex function with respect to $a$, thus
$$h(a)\leq k(a)=2-\frac{2-\frac{2\frac{\sqrt{10}}{2}}{e^\frac{\sqrt{10}}{2}-1}}{\frac{\sqrt{10}}{2}} a$$ where $k(a)$ is a straight line going through $(0,2)$ and $\left(\frac{\sqrt{10}}{2}, h(\frac{\sqrt{10}}{2})\right)$. Since $\frac{2-\frac{2\frac{\sqrt{10}}{2}}{e^\frac{\sqrt{10}}{2}-1}}{\frac{\sqrt{10}}{2}}>0.74$, therefore,
$$2-0.74 a\geq2-\frac{2-\frac{2\frac{\sqrt{10}}{2}}{e^\frac{\sqrt{10}}{2}-1}}{\frac{\sqrt{10}}{2}} a\geq\frac{2a}{e^a-1}, \mbox{ for all } 0<a<\frac{\sqrt{10}}{2}.$$
Hence, from \eqref{sq}, we can conclude that $r_2-a>2-0.74 a$ implies $r_2-a>\frac{2a}{e^a-1}$, therefore it implies $s_1e^{r_2-s_1}-s_2>0$. Notice the following equivalent relationships,

\be\label{sq1}r_2-a>2-0.74 a\modifyb{\iff}  a<\frac{r_2-2}{0.26}\modifyb{\iff} r_2^2-r_1^2<(\frac{r_2-2}{0.26})^2\modifyb{\iff} r_1>r_2-\frac{(\frac{r_2-2}{0.26})^2}{r_2+r_1},\ee
therefore, we can conclude that  $r_1>r_2-\frac{(\frac{r_2-2}{0.26})^2}{r_2+r_1}$ implies $r_2-a>2-0.74 a$, therefore, it implies $r_2-a>\frac{2a}{e^a-1}$, therefore, it implies $s_1e^{r_2-s_1}-s_2>0$, therefore, it implies the existence of $P^i_2$.

Since $2\leq r_1<r_2<2.5$, then $r_2+r_1<2 r_2\leq 5$, thus
$$r_1>r_2-\frac{(\frac{r_2-2}{0.26})^2}{5}\Rightarrow r_1>r_2-\frac{(\frac{r_2-2}{0.26})^2}{2r_2}\Rightarrow r_1>r_2-\frac{(\frac{r_2-2}{0.26})^2}{r_2+r_1}.$$
Therefore,  $r_1>r_2-\frac{(\frac{r_2-2}{0.26})^2}{2r_2}$ implies $r_1>r_2-\frac{(\frac{r_2-2}{0.26})^2}{r_2+r_1}$, therefore, it implies the existence of $P^i_2$.

Notice that $$r_2-\frac{(\frac{r_2-2}{0.26})^2}{5}=-2.958579881(r_2-2.169000001)^2+2.08450001\leq 2.085,$$ hence we can conclude that
$$2.085\leq r_1<r_2\leq 2.5$$ implies $r_1>r_2-\frac{(\frac{r_2-2}{0.26})^2}{r_2+r_1}$, therefore, it implies the existence of $P^i_2$.

So far, we have shown the first part of Theorem \ref{th2:bifurcation}. Now we are going to see that local stability of $P^i_2$. Let $r_1=2$ and $\delta=r_2-r_1=r_2-2$, then we have
$$s_1=r_2-\sqrt{r_2^2-r_1^2}=2+\delta-\sqrt{\delta(4+\delta)}\mbox{ and }s_2=2+\delta+\sqrt{\delta(4+\delta)}.$$ Thus if $\delta$ is small enough, then
$$s_1e^{r_2-s_1}-s_2=(10/3)\delta^{3/2}-(10/3)\delta^2+(41/12)\delta^{5/2}+O(\delta^3)>0.$$
Therefore, from the proof for the first part of Theorem \ref{th2:bifurcation}, we can conclude that the system \eqref{gx0}-\eqref{gy0} has an interior period-2 orbit $P^i_2$ when $r_1=2$ and $\delta=r_2-r_1=r_2-2$ is small enough. The local stability of $P^i_2$ is determined by \modifyb{the eigenvalues of the product of the Jacobian matrices along the periodic-2 orbit} which can be represented as follows:
\be\label{J}
J\vert_{P^i_2}= \left( \begin {array}{cc}\frac{y_1^iy_2^i}{r_1^2}+\frac{r_1x_1^iy_2^ie^{r_2-s_2}}{s_1^2}&\,\,\,-\frac{y_1^ix_2^i}{r_1^2}+\frac{r_1x_1^i\left(-1+y_2^i\right)e^{r_2-s_2}}{s_1^2}\\\noalign{\medskip}- \frac{r_1y_1^iy_2^ie^{r_2-s_1}}{s_2^2} +y_2^i\left(y_1^i-1\right)&\,\,\, -\frac{r_1y_1^ix_2^ie^{r_2-s_1}}{s_2^2}+\left(y_1^i-1\right)\left(y_2^i-1\right)\end {array} \right).\ee
If $\delta$ is small enough, then the trace and determinant of \eqref{J} can be approximated by
$$det(J)+1= 2-\frac{8\delta}{3}+\frac{49\delta^2}{30}+O(\delta^3)\,\,\mbox{ and }\,\,trace(J)= 2-\frac{8\delta}{3}+\frac{3\delta^2}{10}+O(\delta^3).$$
By the Jury test (p57 in Edelstein-Keshet 2005), we see that $P_2^i$ is locally asymptotically stable if
\be\label{sJ}2 > 1 + det(J) = 2-\frac{8\delta}{3}+\frac{49\delta^2}{30}+O(\delta^3)> \vert trace(J)\vert = \vert 2-\frac{8\delta}{3}+\frac{3\delta^2}{10}+O(\delta^3)\vert\ee which is true when $\delta$ is small enough.

Therefore, the statement of Theorem \ref{th2:bifurcation} holds.
\end{proof}

\textbf{Remark:} Theorem \ref{th2:bifurcation} provides a sufficient condition on the existence of the interior period two orbit and their stability. Numerical simulations suggest that the system \eqref{gx0}-\eqref{gy0} has an interior period two orbit whenever $2\leq r_1<r_2<2.5$. In the case that $r_1=2$ and $r_2=2+\delta$, the interior period two orbit $P_2^i$ is locally asymptotically stable whenever $\delta<0.95$ (see Figure \ref{fig:p2i}-\ref{fig:sp2i}).
\begin{figure}[ht]
\begin{center}
\includegraphics[width=100mm]{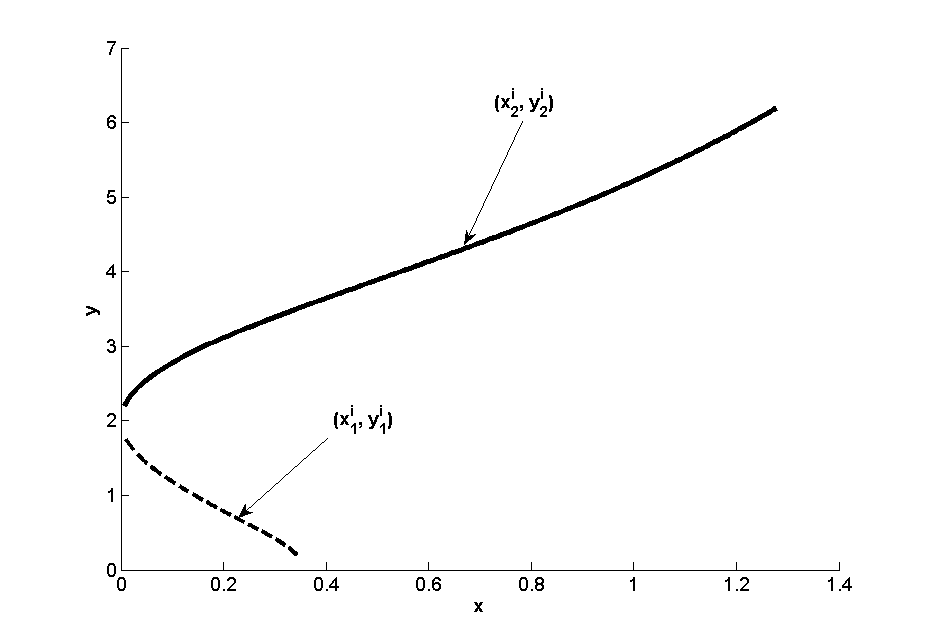}
 \caption{Interior period two orbit $P_2^i$ of the system \eqref{gx0}-\eqref{gy0} when $r_1=2, r_2=2+\delta$ and $\delta$ is varying from 0 to 2. The solid line is $(x_2^i,y_2^i)$ and the dashed line is $(x_1^i,y_1^i)$.}
 \label{fig:p2i}
 \end{center}
\end{figure}

\begin{figure}[ht]
\begin{center}
\includegraphics[width=100mm]{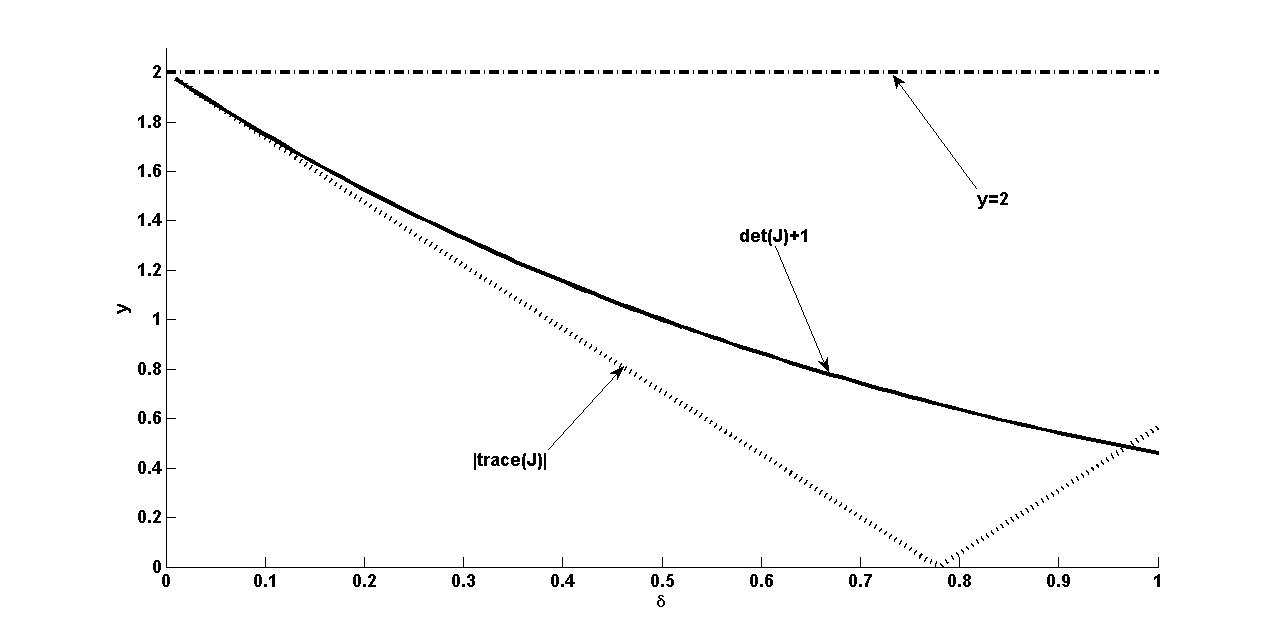}
 \caption{The stability of the interior period-2 orbit $P_2^i$ of the system \eqref{gx0}-\eqref{gy0} when $r_1=2, r_2=2+\delta$ and $\delta$ is varying from 0 to 1. The solid line is $det(J)+1$; the dashed-dot line is constant 2 and the dot line is $\vert trace(J)\vert$. This figure indicates that $P_2^i$ is locally asymptotically stable when $r_1=2$ and $2< r_2=2+\delta<2.95.$ }
 \label{fig:sp2i}
 \end{center}
\end{figure}
\begin{lemma}\label{l3:measure0}[Pre-images of Heteroclinic orbit]
Assume that the system \eqref{gx0}-\eqref{gy0} satisfies Condition \textbf{C1} and \textbf{C3}, then there exists a smooth invariant curve $C$ that connecting $\xi^*$ to $\eta^*$.  Denote $EF_C$ as the collection of all ranks pre-images of $C$, then $m_2(EF_C\cap M)=0$, where $M$ is any compact subset of $X$ and $m_2$ is a Lebesgue measure in $\mathbb R^2_+$.\end{lemma}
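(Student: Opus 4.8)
The statement has two halves, and the measure-zero half is essentially a restatement of Lemma \ref{l2:in_pi}: Condition \textbf{C1} contains verbatim the hypotheses $r_2>r_1>1$ and $e^{2r_2-1-e^{r_2-1}}>1$ of that lemma. So my plan is to spend essentially all of the effort on the first assertion, the existence of the smooth invariant curve $C$; once $C$ is produced, applying Lemma \ref{l2:in_pi} to this $C$ and to the given compact $M\subset X$ immediately gives $m_2(EF_C\cap M)=0$.

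The plan is to take $C$ to be the closure of the branch of the unstable manifold of $\xi^*$ that enters the open quadrant. First I would compute the Jacobians at the two boundary equilibria. At $\xi^*=(r_1,0)$ one gets an upper-triangular matrix with eigenvalues $0$ and $e^{r_2-r_1}$, which under \textbf{C1} satisfy $0<1<e^{r_2-r_1}$, so $\xi^*$ is hyperbolic; since $DH(\xi^*)$ is expanding on the eigenline for $e^{r_2-r_1}$, the unstable manifold theorem (valid here even though $H$ is not globally invertible) yields a one-dimensional real-analytic unstable manifold $W^u(\xi^*)$, whose interior branch $W$ is a real-analytic arc with endpoint $\xi^*$. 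At $\eta^*=(0,r_2)$ one gets a lower-triangular matrix with eigenvalues $r_1/r_2\in(0,1)$ and $1-r_2$, and $|1-r_2|=r_2-1>1$ since $r_2>2$, so $\eta^*$ is a hyperbolic saddle carrying a one-dimensional real-analytic stable manifold whose interior branch $W^s(\eta^*)$ meets the $y$-axis transversally. Condition \textbf{C3} then supplies a heteroclinic orbit $\{\zeta_n\}_{n\ge 0}$, $\zeta_{n+1}=H(\zeta_n)$, with $\zeta_n\to\eta^*$ and backward extension converging to $\xi^*$; such an orbit lies in $W\cap W^s(\eta^*)$, and I set $C:=\overline{W}=\{\xi^*\}\cup W\cup\{\eta^*\}$.

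Next I would verify that $C$ is a smooth invariant curve. Invariance is immediate from the backward-orbit characterization of the unstable manifold, $H(W^u(\xi^*))=W^u(\xi^*)$, together with $H(\xi^*)=\xi^*$ and $H(\eta^*)=\eta^*$. For smoothness, one observes that $\det DH(x,y)=r_1\,y\,e^{r_2-x-y}(1-x-y)/(x+y)^2$, so the fold locus of $H$ is $\{x+y=1\}$; Condition \textbf{C1} forces $r^m=\min\{r_1,r_2,e^{2r_2-1-e^{r_2-1}},r_1e^{r_2-r_1}\}>1$, so by Lemma \ref{l1:pi} there is a positively invariant, globally attracting region $D_\epsilon$ (pick $1<\epsilon\le r^m$ with $\epsilon<r_1$) contained in $\{x+y>1\}$, and a short initial piece of $W$ lies in it; forward invariance of $D_\epsilon$ then keeps all of $W$ in $D_\epsilon$, away from the fold locus, so forward iteration creates no cusps and $W$ is real-analytic. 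Near $\xi^*$, $C$ is just the analytic arc $W^u_{loc}(\xi^*)$; near $\eta^*$, the forward images of a fundamental sub-arc of $W$ between consecutive $\zeta_N$ and $\zeta_{N+1}$ contract onto $\eta^*$ and eventually lie in $W^s_{loc}(\eta^*)$, so by the inclination lemma and analyticity $W$ coincides with $W^s_{loc}(\eta^*)$ near $\eta^*$, making $C$ analytic up to $\eta^*$; embeddedness of $W$ follows from the planar competitive (monotone) structure of $H$, which forces one-dimensional invariant manifolds to be monotone graphs (this, like Lemmas \ref{l1:pi} and \ref{l2:in_pi}, may equally be quoted from Kang (submitted to JDEA)). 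With $C$ in hand, Lemma \ref{l2:in_pi} completes the proof.

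The hard part will be exactly this global regularity of the heteroclinic branch: showing that $\overline{W}$ is a smooth embedded arc all the way to $\eta^*$ and never meets the fold curve $\{x+y=1\}$. The hyperbolicity computations together with the invariant-region estimate of Lemma \ref{l1:pi} handle most of it, but the coincidence of $W$ with $W^s_{loc}(\eta^*)$ near $\eta^*$ and the embeddedness of $W$ really lean on the planar monotone structure of the map, and the most economical route — consistent with the Remark after Lemma \ref{l2:in_pi} — is to cite the needed structural facts about the heteroclinic curve from Kang (submitted to JDEA).
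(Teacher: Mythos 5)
Your proposal is correct and follows essentially the same route as the paper: build $C$ from the unstable manifold of $\xi^*$ and the stable manifold of $\eta^*$, glue them using Condition \textbf{C3}, invoke the stable/unstable manifold theorem for smoothness, and then apply Lemma \ref{l2:in_pi} to get the measure-zero conclusion. The paper's own proof is in fact far terser (it simply asserts that some iterate $H^k(\omega^u_l(\xi^*))$ "is smoothly connected with" $\omega^s_l(\eta^*)$), so your extra care about hyperbolicity at both equilibria, the fold locus $\{x+y=1\}$, and the global regularity of the heteroclinic branch fills in precisely the details the paper leaves implicit.
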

\begin{proof}
First we show that $C$ is a smooth curve connecting $\xi^*=(r_1,0)$ to $\eta^*=(0,r_2)$. Let $\omega^u_l(\xi^*)$ be the local unstable manifold of $\xi^*$ and $\omega^s_l(\eta^*)$ be the local stable manifold of $\eta^*$. Since the map $H$ is smooth, then according to stable manifold theorem (Theorem D.1 in Appendix, Elaydi 2005), we can conclude that both  $\omega^u_l(\xi^*)$ and $\omega^s_l(\eta^*)$ are smooth curves. Since \eqref{gx0}-\eqref{gy0} satisfies Condition \textbf{C3}, then there exists some positive integer $k$ such that $H^k\left(\omega^u_l(\xi^*)\right)$ is smoothly connected with $\omega^s_l(\eta^*)$. Thus $C$ is a smooth invariant curve with $\xi^*, \eta^*$ as its two end points. Then according to Lemma \ref{l2:in_pi}, the statement holds
\end{proof}

 \subsection{Persistence of species $x$ in new space $\tilde{X}$}
Let $$S_x=\{(x,0)\in D_\epsilon\},S_y=\{(0, y)\in D_\epsilon\},S=S_x\bigcup S_y=\{(x,y)\in D_\epsilon: xy=0\}.$$  Let $C$ be the closure of all heteroclinic orbits connecting from $\xi^*$ to $\eta^*$ and denote $EF_{C}$ as the collection of all rank pre-images of $C$. Then we can conclude that $E=\overline{EF_C\cap D_\epsilon}$ is compact and forward invariant. Define the following new spaces
 $${\tilde{S}=S\bigcup E, \tilde{X}=X\setminus \tilde{S}}.$$ Then both $\tilde{S}$ and $\tilde{X}$ is positively invariant. In addition, $\tilde{S}$ is compact since both $S$ and $E$ are compact.

The rest of this section, we assume that the system \eqref{gx0}-\eqref{gy0} satisfies Condition \textbf{C1}-\textbf{C3}. We will prove the following theorem:
\begin{theorem}\label{th3:rp}[Relative Permanence] Assume that the system \eqref{gx0}-\eqref{gy0} satisfies \textbf{C1-C3}. Denote $EF_C$ as the collection of all pre-images of the heteroclinic curve $C$. Then there there exists a compact interior attractor in $R^2_+$ that attracts all points in the interior of $X$ except points in $EF_C$.  In particular, the interior attractor attracts \modifyb{almost every point with respect to a Lebesgue measure} in $\mathbb R^2$ of any compact subset $M$ in the interior of $X$, i.e., $m_2\left(EF_C\bigcap M\right)=0$ where $m_2$ is a Lebesgue measure in $\mathbb R^2$.
\end{theorem}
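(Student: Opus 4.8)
The plan is to obtain relative permanence from two ingredients --- the compact global attractor $D_\epsilon$ of Lemma~\ref{l1:pi} and uniform persistence of \emph{both} species on the positively invariant set $\tilde X=X\setminus\tilde S$ --- and then to read off the required compact interior attractor as the ``persistence attractor'' of abstract persistence theory; the Lebesgue--measure statement then follows at once from Lemma~\ref{l3:measure0}. Since by Lemma~\ref{l1:pi} every orbit enters the compact positively invariant $D_\epsilon$, I would work throughout on $D_\epsilon$, where $\tilde S=S\cup E$ is compact, closed and positively invariant, $\tilde X\cap D_\epsilon=D_\epsilon\cap\{x>0,\,y>0\}\setminus E$, and $\tilde X\cap D_\epsilon$ is positively invariant. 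Persistence of $y$ is already available: \textbf{C1} forces $r_2>r_1$, so Theorem~\ref{th1:ce}(2) yields $\delta>0$ with $\liminf_n y_n\ge\delta$ whenever $y_0>0$, in particular on $\tilde X$. So it remains to produce a compact interior attractor $A$ attracting every point of $\mathrm{int}(X)\setminus EF_C$; the Lebesgue statement then follows since $m_2(EF_C\cap M)=0$ by Lemma~\ref{l3:measure0}.

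The heart of the proof is uniform persistence of $x$ on $\tilde X$, which I would establish by an acyclicity-type persistence theorem for maps (Hofbauer and So 1989; cf.\ Hutson 1984) with persistence function $\rho(x,y)=\mathrm{dist}\big((x,y),\tilde S\big)$, so that $\tilde X=\{\rho>0\}\cap D_\epsilon$. I would first record the transverse behaviour of the three boundary invariant sets. At $\xi^*=(r_1,0)$ the multiplier transverse to $S_x$ is $e^{r_2-r_1}>1$ and the multiplier along $S_x$ is $0$ (since $H(x,0)=\xi^*$), so $W^s_{\mathrm{loc}}(\xi^*)$ is a piece of the $x$-axis and $W^s(\xi^*)$ misses $\mathrm{int}(X)$ (as $y_n=0$ forces $y_0=0$). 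Condition~\textbf{C2} gives the Ricker $2$-cycle $M_y=\{\eta_1,\eta_2\}$ on $S_y$ with transverse multiplier product $r_1^2/(y_1y_2)>1$, while its multiplier under $H^2$ along $S_y$ lies in $(-1,1)$ because $2<r_2<2.52$ keeps the $2$-cycle stable; hence $M_y$ is a saddle $2$-cycle of $H^2$ whose stable manifold lies in $S_y$ and so misses $\mathrm{int}(X)$. The subtle case is $\eta^*=(0,r_2)$: its transverse multiplier $r_1/r_2<1$ makes it transversally \emph{attracting}, so $W^s(\eta^*)$ genuinely reaches into $\mathrm{int}(X)$ --- which is exactly why the extinction set is enlarged to include $E\supseteq EF_C\cap D_\epsilon$. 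By Lemma~\ref{l3:measure0} there is a smooth invariant curve $C$ ending at $\eta^*$, and since the only smooth invariant curve through the hyperbolic saddle $\eta^*$ other than the (unstable) $y$-axis is $W^s_{\mathrm{loc}}(\eta^*)$, we get $W^s_{\mathrm{loc}}(\eta^*)\subseteq C$ and hence $W^s(\eta^*)=\bigcup_{k\ge0}H^{-k}\big(W^s_{\mathrm{loc}}(\eta^*)\big)\subseteq EF_C$; since $EF_C\cap D_\epsilon\subseteq E$ and $\tilde X\cap E=\emptyset$, $W^s(\eta^*)\cap\tilde X=\emptyset$.

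Next I would pin down the recurrence of $H$ on $\tilde S$. On $S$ it is $\{\xi^*\}$ on $S_x$ and (as $2<r_2<2.52$) just $\{\eta^*\}\cup M_y$ on $S_y$. For $p\in E\cap\mathrm{int}(X)$ the forward orbit stays in $E\subseteq\tilde S$, so $\omega(p)$ is a compact invariant --- hence internally chain transitive --- subset of $\tilde S$; as the only chain components of the invariant part of $\tilde S$ are $\{\xi^*\}$, $\{\eta^*\}$, $M_y$, one of which must contain $\omega(p)$, and $\omega(p)$ cannot equal $\{\xi^*\}$ or $M_y$ (their stable sets miss $\mathrm{int}(X)$), we get $\omega(p)=\{\eta^*\}$, whence $p\in W^s(\eta^*)\subseteq EF_C$. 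Thus the recurrence of $H|_{\tilde S}$ is carried by the three hyperbolic, isolated invariant sets $\xi^*,\eta^*,M_y$ with connections only $\xi^*\to\eta^*$ (along $C$) and $\eta^*\to M_y$ (along $S_y$); once one checks that $W^u(M_y)$ leaves $\tilde S$ (so there is no $\eta^*$--$M_y$ cycle) this is an acyclic covering, and the same chain-transitivity argument gives $E\cap\mathrm{int}(X)=EF_C\cap\mathrm{int}(X)$, so $\tilde X\cap D_\epsilon=D_\epsilon\cap\{x>0,\,y>0\}\setminus EF_C$. With a compact global attractor and an acyclic covering of the boundary recurrence by weak $\rho$-repellers, the acyclicity theorem gives uniform weak $\rho$-persistence on $\tilde X$, which upgrades (finite dimension, compact global attractor) to uniform strong $\rho$-persistence: some $\eta_0>0$ has $\liminf_n\mathrm{dist}(H^n\xi,\tilde S)>\eta_0$ for all $\xi\in\tilde X$; together with persistence of $y$ this reads $\liminf_n\min\{x_n,y_n\}>\eta_0'$ on $\tilde X$.

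A uniformly $\rho$-persistent semiflow with a compact global attractor carries a persistence attractor: a compact invariant $A\subseteq\tilde X$ attracting every point (indeed every compact subset) of $\tilde X$ (Smith and Thieme 2011). Being a compact invariant subset of $\tilde X\subseteq\{x>0,\,y>0\}$, $A$ lies in $\{x\ge\delta_0,\,y\ge\delta_0\}$ for some $\delta_0>0$, so it is a compact interior attractor. Finally, any $\xi\in\mathrm{int}(X)\setminus EF_C$ enters $D_\epsilon$ in finitely many steps at a point of $D_\epsilon\cap\{x>0,\,y>0\}$ still outside $EF_C$ (the forward orbit of a point outside $EF_C$ stays outside $EF_C$), hence in $\tilde X$, so $\xi$ is attracted to $A$; with $m_2(EF_C\cap M)=0$ (Lemma~\ref{l3:measure0}) this is the full assertion. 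The step I expect to be the main obstacle is the uniform persistence of $x$ on $\tilde X$: because $\eta^*$ is transversally \emph{attracting}, the average-Lyapunov-function argument used for $y$ (with $P=x$) fails outright, and one must instead exploit that the interior part of $W^s(\eta^*)$ is precisely $EF_C$, so that building the extinction set from $E=\overline{EF_C\cap D_\epsilon}$ renders $\eta^*$ a weak repeller for the truncated system; making this rigorous rests on identifying $W^s_{\mathrm{loc}}(\eta^*)$ with a piece of the invariant curve $C$ (via Lemma~\ref{l3:measure0} and the stable manifold theorem), on the bound $r_2<2.52$ of \textbf{C1} to keep the boundary recurrence finite, and on acyclicity of the boundary covering.
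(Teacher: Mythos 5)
Your overall skeleton matches the paper's: restrict to the compact attractor $D_\epsilon$ of Lemma \ref{l1:pi}, excise $\tilde S=S\cup E$, prove uniform persistence of both species on $\tilde X$ with the persistence function $\rho(\xi)=d(\xi,\tilde S)$, pass from weak to strong persistence and to a persistence attractor via Smith--Thieme, and finish with Lemma \ref{l2:in_pi}/\ref{l3:measure0} for the measure statement. Where you genuinely diverge is in how uniform \emph{weak} persistence of $x$ is obtained. The paper does this by hand in two appendix lemmas: Lemma \ref{repellingC} shows $C$ is a uniform weak repeller by covering the compact part of $C$ away from the saddles with finitely many neighborhoods that are funneled into a neighborhood $B$ of $M_y$, and Lemma \ref{uwr} then ejects orbits from $B$ with $x_n>\epsilon$ using a Hutson-type average Lyapunov function $P(\xi)=x$ and the transversal instability $r_1^2/(y_1y_2)>1$ from \textbf{C2}. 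You instead invoke an acyclicity theorem for the Morse-like decomposition $\{\xi^*\},\{\eta^*\},M_y$ of the boundary recurrence, after observing (correctly, and this is the right diagnosis of the crux) that $\eta^*$ is transversally attracting, that $W^s_{\mathrm{loc}}(\eta^*)$ is a piece of $C$ by the construction in Lemma \ref{l3:measure0}, and hence that $W^s(\eta^*)\cap\tilde X=\emptyset$ once $E$ is removed. Your route is cleaner and more standard; the paper's route is more elementary and self-contained.

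The price of the acyclicity route is two hypotheses you assert or flag but do not close, and these are exactly what the paper's direct argument is designed to avoid. First, you need the recurrence of $H$ on all of $\tilde S$ to be carried by $\{\xi^*\},\{\eta^*\},M_y$. On $S$ this follows from \textbf{C1} and Elaydi--Sacker, but $E=\overline{EF_C\cap D_\epsilon}$ is the closure of a countable union of preimage curves, and its maximal invariant subset need not consist only of $C$ and points forward-asymptotic to $\eta^*$; your chain-transitivity argument presupposes the list of chain components rather than deriving it. Second, acyclicity requires that there be no connection $M_y\to\eta^*$ inside $\tilde S$, i.e., $W^u(M_y)\cap E=\emptyset$; you explicitly leave this unchecked, and it is not obviously rulable out a priori, since a point of $W^u(M_y)$ landing in $EF_C$ would produce precisely the forbidden cycle $\eta^*\to M_y\to\eta^*$. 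The paper's Lemmas \ref{repellingC} and \ref{uwr} sidestep both issues by arguing by contradiction about orbits that remain within $1/k$ of $C$: such an orbit must enter $U_\epsilon(\eta^*)$, slide to the neighborhood $B$ of $M_y$ (which has positive distance from $C$), and be expelled with $x_n>\epsilon$ by the average Lyapunov function --- no global classification of the recurrence on $E$ and no acyclicity of a boundary covering is ever needed. If you want to keep your route, you should either verify these two conditions or replace the acyclicity step with the paper's covering argument along $C$.
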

\begin{proof}
We use the following three main steps to prove the statement.  We provide the detailed proof of the first two steps in the Appendix and the remaining proof here.
\begin{enumerate}
\item $C$ is a uniform weak repeller with respect to $\tilde{X}$, i.e., there exists some $b>0$, for any $\xi\in \tilde{X}$, we have $$\limsup_{n\rightarrow\infty} d(H^n(\xi), C)>b.$$ The detailed proof of this part has been shown in the Appendix (Lemma \ref{repellingC}). This implies that any point in $\tilde{X}$ is going to be away from $C$ in some distance in some future time even if the point is very close to $C$, .
\item Species $x$ is uniformly weakly persistent in $\tilde{X}$, i.e., there exists some $\delta>0$, such that for any initial condition $\xi_0=(x_0,y_0)\in \tilde{X}$, the system has $$\limsup_{n\rightarrow\infty} x_n>\delta.$$The detailed proof of this part has been shown in the Appendix (Lemma \ref{uwr}). This implies that any point in $\tilde{X}$ is going to be away from $\tilde{S}$ in some distance in some future time even if the point is very close to $\tilde{S}$.
\item Species $x$ is uniformly persistent in $\tilde{X}$, i.e., there exists some $\epsilon>0$, such that for any initial condition $\xi=(x,y)\in\tilde{X}$, the system has $$\liminf_{n\rightarrow\infty} x_n >\epsilon.$$
\end{enumerate}
Now we will show that the last step. Define a continuous and not identically zero persistent function $\rho(\xi)=d(\xi,\tilde{S})$ where $\xi\in X$. Then by the definition of the persistent function $\rho$, we have
\[\tilde{S}=\{\xi\in X: \rho(H^t(\xi))=0, \forall t\geq 0\}\] is nonempty, closed and positively invariant.  In addition, the system $H$ satisfies the following two conditions:
\begin{enumerate}
\item There exists no bounded total trajectory $\phi$ such that $\rho(\phi(0))=0, \rho(\phi(-r))=0$ and $\rho(\phi(t))>0$ for some positive integer $r,t$.
\item $H$ has a compact attractor $D_\epsilon$ which attracts all points in $X$.
\end{enumerate}

From Lemma \ref{uwr}, we know that species $x$ is uniformly weakly persistent in $\tilde{X}$. Thus by applying {Theorem 5.2 Smith \& Thieme 2010}, we can conclude that species $x$ is uniformly persistent in $\tilde{X}$, i.e., there exists some $\epsilon>0$, such that for any initial condition $\xi=(x,y)\in\tilde{X}$, the system has $$\liminf_{n\rightarrow\infty} x_n >\epsilon.$$

Notice that species $y$ is uniformly persistent in $X$ whenever $r_2>r_1$ according to Theorem \ref{th1:ce}. Thus, species $y$ is also uniformly persistent in $\tilde{X}$ since $\tilde{X}$ is a positively invariant subset of $X$. Therefore, based on the argument above, we can conclude that there exists some positive constant $\epsilon>0$, such that for any initial condition taken in $\tilde{X}$, we have $$\liminf_{n\rightarrow\infty}\min\{x_n, y_n\}>\epsilon.$$
Hence there there exists a compact interior attractor that attracts all points in the interior of $X$ except points in $EF_C$.

Let $M$ be any compact subset of the interior of $X$. Then any initial condition $\xi_0$ taken in $M$ will enter $D_\epsilon$ in some future time through the following two cases
\begin{enumerate}
\item $\xi_0\in EF_C$ which will enter $C$ in some finite time.
\item $\xi_0\in \tilde{X}$ which is attracted to the interior compact attractor.
\end{enumerate}
Since there are only Lebesgue measure zero of points in $M$ that belong to $EF_C$, therefore, according to Lemma \ref{l2:in_pi}, we can conclude that $m_2\left(EF_C\bigcap M\right)=0$ for any compact subset $M$ in the interior of $X$.
\end{proof}
\section{Discussion and future work}\label{sec:con}
In this article, we study the global dynamics of the system \eqref{gx0}-\eqref{gy0}. We give sufficient conditions {for} the uniform persistence of one species and the existence of locally asymptotically stable interior period-2 orbits for this system. We also show that {for a} certain parameter range, the system \eqref{gx0}-\eqref{gy0} is \emph{relative permanent}, i.e., there exists a compact interior attractor that attracts almost all points in $X$. Numerical simulations strongly {suggest} that this compact interior attractor is the locally asymptotically stable interior period-2 orbit $P_2^i$ and its {basin} of attractions consists of a infinite union of connected regions that are separated by all pre-images of the heteroclinic curve $C$ (see Figure \ref{basind}).

The results that we obtained in Theorem \ref{th1:ce} {are} a special case for model \eqref{gx}-\eqref{gy} when $a=0$. Our Theorem \ref{th2:bifurcation} can be extended to the general model \eqref{gx}-\eqref{gy} when $a>0$. If $r_2>2$ and $ r_1^2>2 a r_2$, the explicit expressions of the interior periodic-2 orbit $\{(x_1^i,y^i_1), (x^i_2,y^i_2\}$ of the system \eqref{gx}-\eqref{gy} can be found as
\bea
x_1^i=\frac{\left(a+s_1\right)\left(s_2-s_1e^{r_2-s_1}\right)}{r_1-\left(a+s_1\right)e^{r_2-s_1}},&& \,\,y_1^i=\frac{r_1s_1-s_2\left(a+s_1\right)}{r_1-\left(a+s_1\right)e^{r_2-s_1}}\\
x_2^i=\frac{\left(a+s_2\right)\left(s_1-s_2e^{r_2-s_2}\right)}{r_1-\left(a+s_2\right)e^{r_2-s_2}},&& \,\,y_2^i=\frac{r_1s_2-s_1\left(a+s_2\right)}{r_1-\left(a+s_2\right)e^{r_2-s_2}}.
\eea where
$$s_1=x_1^i+y_1^i=r_2+\sqrt{(r_2+a)^2-r_1^2} ,\,\, s_2=x_2^i+y_2^i=r_2-\sqrt{(r_2+a)^2-r_1^2}.$$
This interior periodic-2 orbit can have local stability for {a} certain range of parameters' values. For instance, if
$$r_1=2.1, a=0.1 \mbox{ and } r_2=2.5,$$ then the system \eqref{gx}-\eqref{gy} has locally stable interior periodic-2 orbit
$$(x_1^i,y_1^i)=(0.17,0.80) \mbox{ and } (x_2^i, y_2^i)=(0.33, 3.70)$$ along which the eigenvalues of the product of the Jacobian matrices are 0.11 and -0.24. Moreover, numerical simulation suggests follows
\begin{enumerate}
\item There exits a heteroclinic orbit $C$ connecting $\xi^*$ to $\eta^*$ (see Figure \ref{hc_orbit});
\begin{figure}[ht]
\centering
\includegraphics[width=100mm]{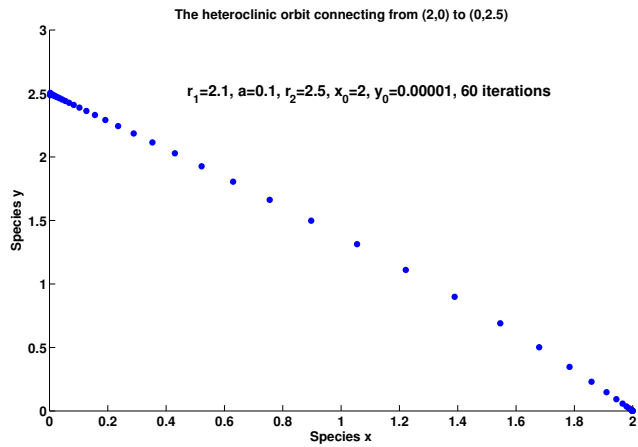}
 \caption{The heteroclinic orbit of the system \eqref{gx}-\eqref{gy} when $r_1=2.1, a=0.1,r_2=2.5$.}
 \label{hc_orbit}
\end{figure}
\item The basin of attraction of the interior periodic-2 orbit $P_2^i$ is all points in the interior of $\mathbb R^2_+$ except a Lebesgue measure zero set in $\mathbb R^2$ which is a collection of all pre-images of the heteroclinic curve $C$ (see Figure \ref{basin}).
\begin{figure}[ht]
\centering
\includegraphics[width=100mm]{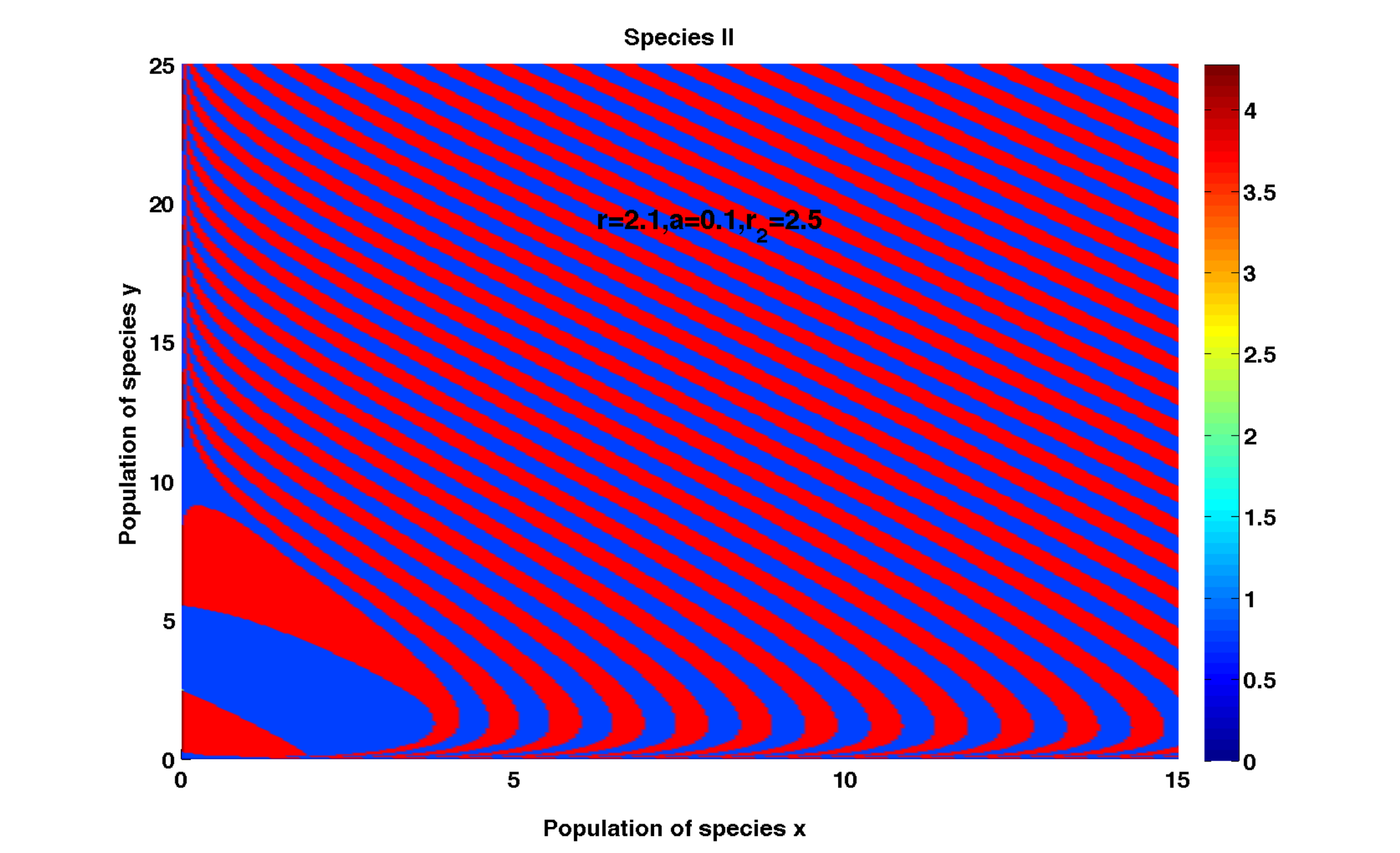}
 \caption{The basin of attraction of the interior period-2 orbit of the system \eqref{gx}-\eqref{gy} when $r_1=2.1, a=0.1,r_2=2.5$ is the open quadrant minus all pre-images of the heteroclinic curve $C$. The latter partition the quadrant into components which are colored according to which of the two periodic points attract points in the component under the second iterate of the map. Given a point in one of the regions, there is a large number $N$, such that the point will be very close to $(x_1^i,y_1^i)$ at the iteration $t$ and will be very close to $(x_2^i,y_2^i)$ at the iteration $t+1$ for all $t>N$.}
 \label{basin}
\end{figure}
\end{enumerate}
However, more mathematical techniques {need to be developed in order to obtain results similar to those in Lemma \ref{l2:in_pi}} for the system \eqref{gx}-\eqref{gy} when $a>0$. {This is an area for future study.}

Our results may apply to a two species discrete-time Lotka-Volterra competition model with {stocking where} both species are governed by Ricker's model and one species is being stocked at the constant per capita stocking rate $s_1$ per generation \eqref{c2x}-\eqref{c2y} (Elaydi and Yakubu 2002a\& 2002b). {We may infer} from simulations (see Figure \ref{c2_phaseplane}) that the basin of attraction of the 2-cycle is the infinite union of connected regions that {are} separated by all pre-images of the heteroclinic curve $C$ when $s_1=.5, q_1=1.5,q_2=2.2,p_1=p_2=1$.
\bae\label{c2x}
x_{n+1}&=&x_n \left[s_1+e^{q_1-p_1(x_n+y_n)}\right]\\
\label{c2y}
y_{n+1}&=&y_n e^{q_2-p_2(x_n+y_n)}
\eae
\begin{figure}[ht]
\begin{center}
\includegraphics[width=120mm]{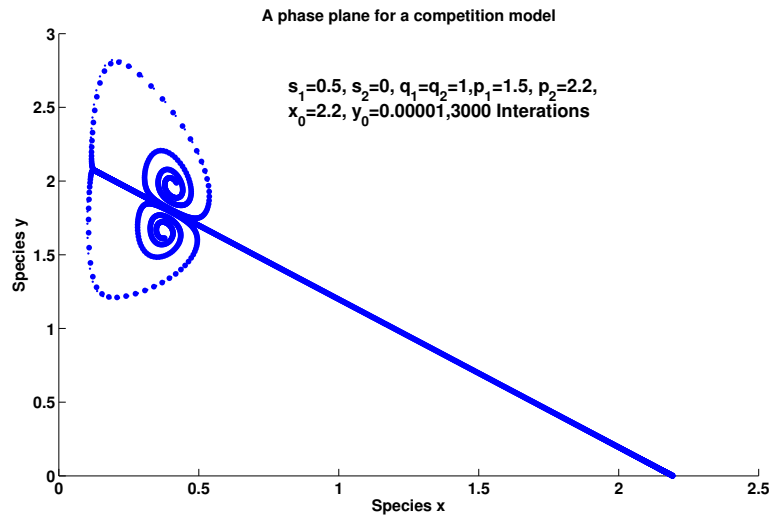}
 \caption{A single forward orbit of the system \eqref{c2x}-\eqref{c2y} starting near the fixed point on $x$-axis.} \label{c2_phaseplane}
 \end{center}
 \end{figure}

The system \eqref{gx}-\eqref{gy} is not the only competition model that has a local stable interior period-2 orbit that attracts all points of $\mathbb R^2_+$ except all pre-images of the heteroclinic curve $C$ that is connecting two nontrivial boundary equilibria. In general, if a discrete two-species competition model satisfies the following conditions (see Figure \ref{gstructure} for a schematic presentation), numerical simulations suggest that it can have an interior attractor that attracts all points of $\mathbb R^2_+$ except all pre-images of the heteroclinic orbit that is connecting two nontrivial boundary equilibria.  It will be our future work to develop more powerful analytic tools to rigorously prove this.
\begin{itemize}
\item \textbf{G1:}The system has only two nontrivial boundary equilibria $(x^*,0)$ and $(0, y^*)$. Moreover, species $y$ is persistent.
\item \textbf{G2:}The omega limit set of $y$-axis is a unique attracting period-2 orbit $M_y=\{\eta_1,\eta_2\}=\{(0,y_1),(0,y_2)\}$ on y-axis, which attracts all points in $y$-axis except a Lebesgue measure zero set. In addition, the external Lyapunov exponent of $M_y$ is greater than 1, i.e., species $x$ can invade species $y$ on $M_y$.
\item \textbf{G3:}There is a heteroclinic orbit connecting the boundary equilibrium $(x^*,0)$ to $(0, y^*)$.
\end{itemize}

\begin{figure}[ht]
 \centering
  \includegraphics[width=120mm]{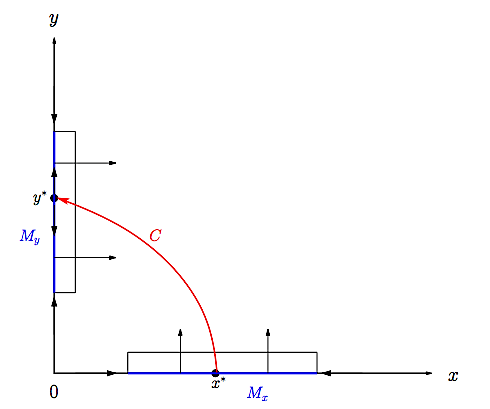}
   \caption{The general structure of dynamics that the basin of attraction of interior attractors (e.g.,the interior periodic-2 orbit) is all points in $\mathbb R^2_+$ except the collection of all pre-images of the heteroclinic curve $C$.}
   \label{gstructure}
   \end{figure}


\appendix
\section{Important Lemmas}

\begin{lemma}\label{repellingC}[Uniform Weak Repeller]
If the system \eqref{gx0}-\eqref{gy0} satisfied satisfies Condition \textbf{C1}-\textbf{C3}. Then there exists some $b>0$, such that $$\limsup_{n\rightarrow\infty} d(H^n(\xi), C)>b\mbox{ for any } \xi\in \tilde{X}. $$\end{lemma}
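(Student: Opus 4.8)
The plan is to read Lemma \ref{repellingC} as a \emph{uniform weak $\rho$-persistence} statement for the persistence function $\rho(\xi)=d(\xi,C)$, and to extract the uniform constant $b$ from the acyclicity machinery of persistence theory (Butler--McGehee/Hale--Waltman/Thieme; cf. Hutson 1984 and Smith \& Thieme), which is already the paper's toolkit. First I would invoke Lemma \ref{l1:pi} to reduce to the compact absorbing set $D_\epsilon$: since $\limsup_n d(H^n\xi,C)$ depends only on the tail of the orbit and every orbit enters $D_\epsilon$, it suffices to find $b>0$ with $\limsup_n d(H^n\xi,C)>b$ for every $\xi$ in the (positively invariant) set $\tilde X\cap D_\epsilon$. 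By Lemma \ref{l3:measure0}, $C$ is a smooth invariant arc joining the two saddles $\xi^*=(r_1,0)$ and $\eta^*=(0,r_2)$, obtained by gluing $\omega^u_l(\xi^*)$ to $\omega^s_l(\eta^*)$; in particular $C\subseteq D_\epsilon$, $H|_C$ is a homeomorphism of the arc fixing its endpoints, and, because $C$ is the closure of heteroclinic orbits running from $\xi^*$ to $\eta^*$, the restricted dynamics on $C$ is gradient-like (every forward orbit in $C\setminus\{\xi^*\}$ tends to $\eta^*$, every backward orbit to $\xi^*$). Hence the only internally chain transitive — in particular, the only $\omega$-limit — subsets of $C$ are $\{\xi^*\}$ and $\{\eta^*\}$.

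Next I would pin down the boundary recurrence. Taking the extinction set to be $C$ and the ambient state space to be the positively invariant set $(\tilde X\cap D_\epsilon)\cup C$, the set $\Omega_\partial=\bigcup\{\omega(\xi):\omega(\xi)\subseteq C\}$ collapses to $\{\xi^*\}\cup\{\eta^*\}$: for $\xi\in C$ this is the gradient-like remark, and no $\xi\in\tilde X\cap D_\epsilon$ can have $\omega(\xi)\subseteq C$ — such an $\omega(\xi)$ would be $\{\xi^*\}$ or $\{\eta^*\}$, but (i) along an orbit converging to $\xi^*$ one has $x_n+y_n\to r_1<r_2$, so $y_{n+1}/y_n=e^{r_2-x_n-y_n}\to e^{r_2-r_1}>1$ by \textbf{C1}, contradicting $y_n\to 0$ (with $y_n>0$ on $\tilde X$), and (ii) an orbit converging to $\eta^*$ eventually lies on $\omega^s_l(\eta^*)\subseteq C$, forcing $\xi\in EF_C\cap D_\epsilon\subseteq\tilde S$, against $\xi\in\tilde X$. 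Thus $\{M_1,M_2\}=\{\{\xi^*\},\{\eta^*\}\}$ is a finite covering of $\Omega_\partial$ by disjoint compact invariant sets, each isolated since $\xi^*,\eta^*$ are hyperbolic saddles under \textbf{C1}, and the covering is acyclic because the only connecting chain inside $C$ is $\{\xi^*\}\to\{\eta^*\}$. The two weak-repeller hypotheses $W^s(M_i)\cap(\tilde X\cap D_\epsilon)=\emptyset$ are precisely (i) and (ii) above. Since $H$ also has the compact global attractor $D_\epsilon$, the acyclicity theorem delivers a uniform $b>0$ with $\limsup_n d(H^n\xi,C)=\limsup_n\rho(H^n\xi)>b$ for all $\xi\in\tilde X\cap D_\epsilon$, hence for all $\xi\in\tilde X$.

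I expect the main obstacle to be the weak-repeller verification at $\eta^*$ — that the interior stable set of $\eta^*$ consists only of preimages of $C$ and so is disjoint from $\tilde X$ — which hinges on the inclusion $\omega^s_l(\eta^*)\subseteq C$ coming from the gluing construction in Lemma \ref{l3:measure0}, together with the bookkeeping that $\tilde X$ (restricted to $D_\epsilon$) really omits every rank-$k$ preimage of $C$. A secondary technical point is placing the argument in a form where the acyclicity/uniform-persistence theorem applies cleanly: one must choose the extinction set (either $C$ as above, or the compact positively invariant $\tilde S$, in which case $\Omega_\partial$ gains the transversally unstable boundary period-$2$ orbit $M_y$ of \textbf{C2}, added as a third — again hyperbolic, hence isolated — member of the still-acyclic chain $\{\xi^*\}\to\{\eta^*\}\to M_y$) and confirm that the boundary chain-recurrent structure is genuinely acyclic. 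The underlying eigenvalue computations at $\xi^*$, $\eta^*$ (and $M_y$), and the ratio estimate for $y_{n+1}/y_n$, are routine consequences of \textbf{C1}--\textbf{C2}.
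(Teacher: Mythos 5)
Your reduction of the lemma to uniform weak $\rho$-persistence with $\rho(\xi)=d(\xi,C)$ is a genuinely different route from the paper, and the dynamical ingredients you isolate are the right ones (gradient-like dynamics on the arc $C$; no interior orbit converges to $\xi^*$ because $y_{n+1}/y_n\to e^{r_2-r_1}>1$; interior orbits converging to $\eta^*$ must eventually lie on $\omega^s_l(\eta^*)\subseteq C$ and hence belong to $EF_C$, which $\tilde{X}$ excludes). The gap is in the appeal to the acyclicity theorem itself: there is no choice of state space on which its hypotheses are simultaneously satisfied. On any space containing the coordinate axes, $\{\xi^*\}$ is \emph{not} weakly $\rho$-repelling (every point of the positive $x$-axis has $\rho>0$ and maps onto $\xi^*\in C$ in one step), so you are forced to excise $\tilde{S}$ and work on $Y=(\tilde{X}\cap D_\epsilon)\cup C$. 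But $Y$ is not closed, and the omega-limit sets of points of $\tilde{X}\cap D_\epsilon$ are not shown to lie in $Y$: they can meet $\tilde{S}\setminus C$ --- the boundary $2$-cycle $M_y$, pieces of the axes near $\xi^*$ and $\eta^*$, and accumulation points of $EF_C$. So the ``compact attractor of points'' hypothesis of the Smith--Thieme acyclicity theorem is unavailable on $Y$, and the Butler--McGehee extraction of limiting compact invariant sets (which is where compactness is actually used) cannot be run inside $Y$. Your fallback of taking $\tilde{S}$ as the extinction set on all of $D_\epsilon$ does not repair this: then $\Omega_\partial$ must also account for $\omega$-limit sets of points of $E\setminus EF_C$ (the closure defect of $E=\overline{EF_C\cap D_\epsilon}$), which are not identified, so the covering $\{\xi^*\}\to\{\eta^*\}\to M_y$ is not shown to contain $\Omega_\partial$.

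This is precisely why the paper argues by hand rather than citing the abstract machinery. Its proof covers the compact middle portion $K=\overline{C\setminus(U_\epsilon(\xi^*)\cup U_\epsilon(\eta^*))}$ by finitely many neighborhoods whose points reach $U_\epsilon(\eta^*)$ in uniformly bounded time, uses Hartman--Grobman to eject orbits from the saddle neighborhoods, and --- crucially --- shows that an orbit shadowing $C$ is, after passing $\eta^*$, carried into a fixed compact neighborhood $B$ of the boundary $2$-cycle $M_y$, so that $b=d(C,B)>0$ furnishes the repelling constant. Condition \textbf{C2} and the set $M_y$ thus carry the quantitative content of the lemma; in your argument they appear only as an optional third node of the chain. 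To salvage your approach you would have to reprove the acyclicity argument in this non-closed setting, at which point you would be reconstructing the paper's contradiction argument around display \eqref{false}.
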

\begin{proof}
The condition $2<r_2<2.52$ indicates that species $y$ has a unique attracting period-two orbit $$\{\eta_1,\eta_2\}=\{(0,y_1), (0, y_2)\}$$ in its single state and the condition $r_1<r_2$ implies that the boundary equilibrium $\eta^*=(0, r_2)$ is a saddle. By Hartman-Grobman-Cushing theorem \modifyb{(Elaydi 2005)}, there exists some neighborhood $U_{\epsilon_1}(\eta^*)$ of $\eta^*$, such that any point $\eta\in U_{\epsilon_1}(\eta^*)\cap\tilde{X}$ will exit from this neighborhood in some finite time. If we choose $\epsilon$ small enough, then $\eta$ is attracted to a compact neighborhood $$B=\overline{U_\delta(M_y)}=\{\xi\in\tilde{X}: d(\xi, M_y)\leq \delta \}$$ where $M_y=\{\eta_1,\eta_2\}$ in some finite time. Similarly, the condition $0<r_1<r_2$ implies that the boundary equilibrium $\xi^*$ is also a saddle, by Hartman-Grobman-Cushing theorem, there exists some neighborhood $U_{\epsilon_2}(\xi^*)$ of $\xi^*$, such that any point $\xi\in U_{\epsilon_2}(\xi^*)\cap\tilde{X}$ will exit from this neighborhood in some finite time.

Choose $\epsilon=\min\{\epsilon_1,\epsilon_2\}$. Let $K=\overline{C\setminus \left(U_\epsilon(\xi^*)\bigcup U_\epsilon(\eta^*)\right)}$, then $K$ is a compact subset of $C$. Since $C$ is the closure of the family of heteroclinic orbits connecting $\xi^*$ to $\eta^*$, then any point $\xi\in K$ will reach $U_\epsilon(\eta^*)$ in some finite time $m_\epsilon(\xi)$. Moreover, there exists a neighborhood of $\xi$, denoted by $U_{\delta_\xi}(\xi)$ will contain in $U_\epsilon(\eta^*)$ in time $m_\epsilon(\xi)$, i.e., $$H^{m_\epsilon(\xi)}\left(U_{\delta_\xi}(\xi)\right)\subset U_\epsilon(\eta^*).$$

\noindent Then we can see that $$K\subset \bigcup_{\xi\in K} U_{\delta_\xi}(\xi).$$\noindent Since $K$ is compact, it has a finite open cover, i.e.,
$$K\subset \bigcup_{i=1}^{\bar{m}} U_{\delta_{\xi_i}}(\xi_i).$$
\noindent Choose $\delta=\min\{\epsilon, \min_{1\leq i\leq \bar{m}}\{\delta_{\xi_i}\}\}$. Then any point $\xi\in\tilde{X}$ with $d(\xi, K)<\delta$, then there exists some $m_\xi=m_\epsilon(\xi_i), 1\leq i\leq \bar{m}$, such that $H^{m_\xi}(\xi)\in U_\epsilon(\eta^*)$.

Now assume that the statement of Lemma \ref{repellingC} is not true. Then for any $k$ large enough, there exists some $\xi_k\in\tilde{X}$ and a positive integer $n_k$ such that
\bae\label{false}
d\left(H^{n}(\xi_k), C\right)&<&\frac{1}{k}, \mbox{ for any } n\geq n_k.\eae
\noindent Choose $k>\frac{1}{\delta}$. Then $d\left(H^{n_k}(\xi_k), C\right)< \delta$. We show the contradiction in the following three situations:
\begin{enumerate}
\item If $H^{n_k}(\xi_k)\in U_\epsilon(\eta^*)$, then by Hartman-Grobman-Cushing theorem, $H^{n_k}(\xi_k)$ will exit from $U_\epsilon(\eta^*)$ in some finite time $n_\epsilon(\xi_k,\eta^*)$ and be attracted to a compact neighborhood $B$ in some finite time $l_\epsilon(\xi_k)$. Let $b=d(C, B)$, then we have $$d\left(H^{n_k+n_\epsilon(\xi_k,\eta^*)+l_\epsilon(\xi_k)}(\xi_k), C\right)>b$$ which is a contradiction to \eqref{false}.\\
 \item If $d\left(H^{n_k}(\xi_k),K\right)< \delta$, then there exists some $m_{\xi_k}=m_\epsilon(\xi_i), 1\leq i\leq \bar{m}$, such that $$H^{m_{\xi_k}}\left(H^{n_k}(\xi_k)\right)=H^{n_k+m_{\xi_k}}(\xi_k)\in U_\epsilon(\eta^*),$$which we go back to the first case, therefore, there is a contradiction to \eqref{false}.\\
\item If $H^{n_k}(\xi_k)\in U_\epsilon(\xi^*)$, then by Hartman-Grobman-Cushing theorem, $H^{n_k}(\xi_k)$ will exit from $U_\epsilon(\xi^*)$ in some finite time $n_\epsilon(\xi_k,\xi^*)$, i.e., $$d\left(H^{n_\epsilon\left(\xi_k,\xi^*\right)}\left(H^{n_k}(\xi_k)\right),\xi^*\right)=d\left(H^{n_k+n_\epsilon\left(\xi_k,\xi^*\right)}(\xi_k),\xi^*\right)\geq \epsilon .$$ From \eqref{false}, we have
$$d\left(H^{n_k+n_\epsilon\left(\xi_k,\xi^*\right)}(\xi_k),C\right)<\delta$$which we go back to either the first case or the second case, therefore, there is a contradiction to \eqref{false}.\\
\end{enumerate}
Based on the arguments above, we can conclude that the statement of Lemma \ref{repellingC} is true.
\end{proof}

\begin{lemma}\label{uwr}[Uniform weak persistence]
If the system \eqref{gx0}-\eqref{gy0} satisfies Condition \textbf{C1}-\textbf{C3}. Then there exists some $\delta>0$, such that for any initial condition $\xi_0=(x_0,y_0)\in \tilde{X}$, the system has $$\limsup_{n\rightarrow\infty} x_n>\delta.$$
\end{lemma}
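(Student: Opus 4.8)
The plan is to argue by contradiction: assuming $x$ fails to be uniformly weakly persistent, I extract an orbit that forever shadows the invariant $y$-axis $S_y$, identify the only two boundary behaviours compatible with Condition \textbf{C1}, and eliminate each. Suppose $x$ is not uniformly weakly persistent in $\tilde X$. Then for every $k$ there is $\xi^{(k)}\in\tilde X$ with $\limsup_{n\to\infty}x_n^{(k)}\le 1/k$, where $(x_n^{(k)},y_n^{(k)})=H^n(\xi^{(k)})$. Since $D_\epsilon$ is compact, positively invariant, and absorbs $X$ (Lemma \ref{l1:pi}), and $\tilde X$ is positively invariant, I may replace $\xi^{(k)}$ by a high iterate and assume its whole forward orbit lies in $D_\epsilon\cap\tilde X$ with $x_n^{(k)}<1/k$ for all $n\ge 0$. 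Each $\omega$-limit set $\omega_k:=\omega(\xi^{(k)})$ is then a nonempty compact invariant subset of $D_\epsilon\cap\{x\le 1/k\}$. As $\bigcup_k\omega_k\subset D_\epsilon$ is precompact, I pass to a subsequence along which $\omega_k\to\Omega$ in the Hausdorff metric; then $\Omega$ is a nonempty compact $H$-invariant set contained in $S_y\cap D_\epsilon=\{0\}\times[\epsilon,e^{r_2-1}]$, hence invariant for the one-dimensional Ricker map $g(y)=y\,e^{r_2-y}$ on that invariant interval.

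Next I would pin $\Omega$ down. Each $\omega_k$ is internally chain transitive for $H|_{D_\epsilon}$ (a general property of $\omega$-limit sets of continuous self-maps of a compact metric space), and internal chain transitivity passes to Hausdorff limits, so $\Omega$ is an internally chain transitive invariant set of $g$ on $[\epsilon,e^{r_2-1}]$. By \textbf{C1} we have $2<r_2<2.52$, which lies below the second period-doubling parameter of the Ricker map; for such $r_2$ the only internally chain transitive invariant sets of $g$ on this interval are the fixed point $\eta^*=(0,r_2)$ and the period-two orbit $M_y=\{(0,y_1),(0,y_2)\}$ (which exists since $r_2>2$, satisfies $y_1+y_2=2r_2$, and is attracting on the axis). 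Consequently, along the subsequence and for all large $k$, the set $\omega_k$ — and hence the tail of the forward orbit of $\xi^{(k)}$ — is confined to an arbitrarily small neighbourhood of $M_y$, or of $\eta^*$, according to which of the two $\Omega$ equals.

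It remains to rule out each case. If $\Omega=M_y$: since $H$ carries a small ball about $(0,y_1)$ into a small ball about $(0,y_2)$ and conversely, once the orbit enters a neighbourhood $U$ of $M_y$ of radius $\rho$ its $y$-coordinates alternate between values within $2\rho$ of $y_1$ and of $y_2$ while $x_n^{(k)}<\rho$; the first equation then gives $x_{n+1}^{(k)}=\frac{r_1 x_n^{(k)}}{x_n^{(k)}+y_n^{(k)}}\ge\frac{r_1}{y_n^{(k)}+2\rho}\,x_n^{(k)}$, hence $x_{n+2}^{(k)}\ge\frac{r_1^2}{(y_1+2\rho)(y_2+2\rho)}\,x_n^{(k)}$. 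By \textbf{C2}, $\frac{r_1^2}{y_1 y_2}>1$, so for $\rho$ small the two-step factor exceeds some $1+\gamma>1$; since $x_n^{(k)}>0$ (the orbit is interior) this forces $x_n^{(k)}\to\infty$, contradicting $x_n^{(k)}<1/k$. If $\Omega=\{\eta^*\}$: then $H^n(\xi^{(k)})\to\eta^*$, and since $\eta^*$ is an endpoint of the heteroclinic curve $C$ of Lemma \ref{l3:measure0}, $d(H^n(\xi^{(k)}),C)\le d(H^n(\xi^{(k)}),\eta^*)\to 0$, contradicting Lemma \ref{repellingC}, which gives $\limsup_{n\to\infty}d(H^n(\xi^{(k)}),C)>b>0$ for every point of $\tilde X$. (Equivalently, by \textbf{C1} the saddle $\eta^*$ has one-dimensional local stable manifold lying inside $C\subset\tilde S$, so a point of $\tilde X$ near $\eta^*$ lies off that manifold and, by the Hartman--Grobman--Cushing theorem, must leave a fixed neighbourhood of $\eta^*$.) Both cases being impossible, $x$ is uniformly weakly persistent in $\tilde X$.

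The main obstacle I anticipate is the structural step of the second paragraph: showing that for $2<r_2<2.52$ the Ricker map has no internally chain transitive invariant set on $[\epsilon,e^{r_2-1}]$ besides $\eta^*$ and $M_y$ — that no higher-period window or nontrivial invariant Cantor set has yet appeared — together with the soft-dynamics facts (internal chain transitivity of $\omega$-limit sets and its persistence under Hausdorff limits) that legitimately reduce the perturbed planar dynamics near the $y$-axis to one-dimensional Ricker dynamics. By comparison, the geometric-series estimate driven by \textbf{C2} and the appeal to Lemma \ref{repellingC} in the $\eta^*$ case are routine.
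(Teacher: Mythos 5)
Your proposal is correct in substance but takes a genuinely different route from the paper's. The paper runs Hutson's average-Lyapunov-function scheme: with $P(\xi)=x$ it uses lower semicontinuity of the ratio $r(t,\xi)=P(H^t(\xi))/P(\xi)$ and the transversal instability of $M_y$ (Condition \textbf{C2}) to cover a compact neighbourhood $B$ of $M_y$ by finitely many open sets $\alpha(\bar h,k_i)$ on which $x$ is multiplied by at least $1+\bar h$ in $k_i$ steps, concludes that no orbit can stay in $B$ forever (else $x_n\to\infty$, contradicting boundedness), and then invokes Lemma \ref{repellingC} for orbits shadowing $C$. You instead localize a hypothetical non-persistent orbit via its $\omega$-limit set, pass to a Hausdorff limit, and use internal chain transitivity to force that limit to be either $\{\eta^*\}$ or $M_y$; the $M_y$ case is killed by the explicit two-step multiplier $r_1^2/\bigl((y_1+2\rho)(y_2+2\rho)\bigr)>1$, which is precisely the quantitative content of \textbf{C2} that the paper packages into its open cover, and the $\eta^*$ case by Lemma \ref{repellingC}, just as the paper handles points near $C$. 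What your route buys is a cleaner accounting of \emph{every} way an orbit can approach the boundary (the paper's closing step, which only discusses points in $B$ and points near $C$, is sketchier on this trichotomy); what it costs is the structural input that for $2<r_2<2.52$ the only internally chain transitive invariant sets of the Ricker map on $[\epsilon,e^{r_2-1}]$ are $\{r_2\}$ and $\{y_1,y_2\}$, together with the soft facts that $\omega$-limit sets are internally chain transitive and that this property passes to Hausdorff limits (both standard, e.g.\ Hirsch--Smith--Zhao). That structural input is real --- it amounts to knowing the chain recurrent set of the Ricker map before its second period doubling --- but it is exactly the same global fact about the axis dynamics that the paper itself assumes without proof (``unique attracting period-two orbit'' plus the citation of Elaydi--Sacker), so your argument sits at the same level of rigor as the paper's. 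Two minor remarks: in the $\eta^*$ case you do not need $\omega_k=\{\eta^*\}$ exactly, since Hausdorff convergence already confines the orbit's tail to a ball of radius less than $b$ about $\eta^*\in C$, contradicting Lemma \ref{repellingC} directly; and your parenthetical ``the orbit is interior, so $x_n>0$'' silently repairs a defect in the paper's definition of $\tilde S$ (which removes the axes only inside $D_\epsilon$), a repair both proofs need.
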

\begin{proof} Since the system satisfies Condition \textbf{C1-C3}, then there exists a compact neighborhood $W\subset S_{y}\cap \tilde{S}$ of the stable periodic-2 orbit $M_y=\{\eta_1,\eta_2\}$ attracting all points $(0,y)\in\tilde{S}$ from Theorem 4.3 (Elaydi and Sacker 2004). Condition \textbf{C3} implies that $M_y$ is transversal unstable, i.e., its external Lyapunov exponent is greater than 1.

Define $P(\xi)=x$ where $\xi=(x,y)\in \tilde{X}$ and
$$r(t,\xi)= \left \{ \begin {array}{cc} \frac{P\left(H^t(\xi)\right)}{P(\xi)}& (\xi\in \tilde{X})\\\noalign{\medskip}  \lim_{\eta\in \tilde{X}\rightarrow \xi\in S_{y}\cap\tilde{S}} \inf \frac{P\left(H^t(\eta)\right)}{P(\eta)} & (\xi\in S_{y}\cap\tilde{S}).\end {array}\right\}$$
Then $r(t,\cdot)$ is lower semicontinuous. For $h>0, t\geq 0$ set \[\alpha(h,t)=\{\xi\in\tilde{X}: r(t,\xi)>1+h\}.\]Then $\alpha(h,t)$ is an open set from the semicontinuity and it has property that $\alpha(h_1,t)\subset \alpha(h_2,t)$ if $ h_1>h_2$. Since $W$ is compact, then there exists a $\bar{h}>0$, and a finite increasing positive integers $k_i, i=1... N$, such that
\[W\subset B\subset \bigcup_{i=1}^{N}\alpha(\bar{h}, k_i).\]where $$B=\overline{U_\epsilon(M_y)}=\{\xi\in \tilde{X}: d(\xi, M_y)\leq \epsilon\}$$ is a compact neighborhood of $W$ in $\tilde{X}$.
We want to show that for any point $\xi=(x,y)\in B \setminus W$, its semi-orbit $\gamma^+(\xi)$ eventually exits from $B$ with $x_n>\epsilon$ for some positive integer $n$. If this is not true, then there exists some point $\xi =(x,y)\in B\setminus W$ such that $H^n(\xi)\in  B \setminus W$ for all $n\geq 0$. Since any point in $B$ belongs to some $\alpha(\bar{h}, k_i), 1\leq i\leq\bar{h}$. This implies that there is a sequence of integer $n_i\rightarrow \infty$ with $n_i-n_{i-1}\in \{k_1,..., k_N\}$ for each $i$ such that
\[P\left(H^{n_i}(\xi)\right)>P\left(H^{n_{i-1}}(\xi)\right)[1+\bar{h}]> x[1+\bar{h}]^{i}\rightarrow \infty \mbox{ since } x>0\mbox{ and } n_i \rightarrow \infty\]which is a contradiction to the fact that all points are attracted to the compact set $D_\epsilon$. Thus, for any point $\xi=(x,y)\in B \setminus W$, its semi-orbit $\gamma^+(\xi)$ eventually exits from $B$ with $x_n>\epsilon$ for some positive integer $n$. Combined with Lemma \ref{repellingC}, we can conclude that for any point $\xi\in \tilde{X}$ that is close enough to $C$, it will enter the compact neighborhood $B$ of $M_y$ and exit from $B$ in some finite time. Therefore, there exists some $\epsilon>0$, such that for any initial condition $\xi\in \tilde{X}$, the system has
\[\limsup_{n\rightarrow\infty} x_n \geq \epsilon .\]
\end{proof}


\end{document}